\documentclass[a4paper,12pt,reqno]{amsart}

\usepackage{stix2}
\usepackage{amsthm}
\usepackage{mathtools}
\usepackage[marginratio=1:1,scale=.7]{geometry}
\usepackage{tikz-cd}
\usepackage{tikz}
\usetikzlibrary{calc}
\usepackage[l2tabu,orthodox]{nag}
\usepackage{here}
\usepackage{graphicx}
\usepackage{enumitem}
   \setlist{leftmargin=0pt, itemindent=2.0em, labelsep=1em}
\usepackage[dvipsnames]{xcolor}
\usepackage{longtable}
\usepackage{adjustbox}
\usepackage{mathdots}
\usepackage{setspace}
\usepackage[breaklinks=true,colorlinks=true,linkcolor=Green,citecolor=MidnightBlue]{hyperref}
\usepackage{etoolbox}
\usepackage{aliascnt}

\usetikzlibrary{decorations.markings}
\usetikzlibrary{shapes.geometric}
\tikzset{every loop/.style={min distance=10mm,looseness=10}}
\tikzcdset{scale cd/.style={every label/.append style={scale=#1},
    cells={nodes={scale=#1}}}}
\usepackage{amsrefs}

\numberwithin{equation}{section}
\numberwithin{figure}{section}
\numberwithin{table}{section}
\allowdisplaybreaks

\setenumerate{label=(\arabic*),nosep}
\setitemize{nosep}
\newlist{clist}{enumerate}{1}
\setlist*[clist]{label=(\roman*), nosep}

\theoremstyle{plain}
\newtheorem{thm}{Theorem}[section]

\newaliascnt{lem}{thm}
\newtheorem{lem}[lem]{Lemma}
\aliascntresetthe{lem}

\newaliascnt{prp}{thm}
\newtheorem{prp}[prp]{Proposition}
\aliascntresetthe{prp}

\newaliascnt{cor}{thm}
\newtheorem{cor}[cor]{Corollary}
\aliascntresetthe{cor}

\newaliascnt{conj}{thm}

\aliascntresetthe{conj}

\theoremstyle{definition}

\newaliascnt{ass}{thm}

\aliascntresetthe{ass}

\newaliascnt{dfn}{thm}
\newtheorem{dfn}[dfn]{Definition}
\aliascntresetthe{dfn}

\newaliascnt{eg}{thm}
\newtheorem{eg}[eg]{Example}
\aliascntresetthe{eg}

\newaliascnt{rmk}{thm}
\newtheorem{rmk}[rmk]{Remark}
\aliascntresetthe{rmk}

\newtheorem*{ntn}{Notation}
\newtheorem*{ackn}{Acknowledgements}
\newtheorem*{org}{Organization}
\newtheorem*{aiuse}{Use of AI}

\usepackage{cleveref}
\crefname{thm}{Theorem}{Theorems}
\crefname{ass}{Assumption}{Assumptions}
\crefname{cor}{Corollary}{Corollaries}
\crefname{dfn}{Definition}{Definitions}
\crefname{lem}{Lemma}{Lemmas}
\crefname{ntn}{Notation}{Notations}
\crefname{prp}{Proposition}{Propositions}
\crefname{rmk}{Remark}{Remarks}
\crefname{eg}{Example}{Examples}
\crefname{section}{\S\!}{\S\S\!}
\crefname{equation}{equation}{equations}

\newcommand{\tit}{\textit}
\newcommand{\ol}{\overline}

\newcommand{\bbC}{\mathbb{C}}

\newcommand{\bbN}{\mathbb{N}}

\newcommand{\bbZ}{\mathbb{Z}}

\newcommand{\scL}{\mathscr{L}}

\newcommand{\calC}{\mathcal{C}}
\newcommand{\calI}{\mathcal{I}}

\newcommand{\calH}{\mathcal{H}}
\newcommand{\calM}{\mathcal{M}}

\DeclareMathOperator{\Ker}{Ker}

\DeclareMathOperator{\Ind}{Ind}
\DeclareMathOperator{\Irr}{Irr}
\DeclareMathOperator{\Stab}{Stab}
\DeclareMathOperator{\reg}{reg}

\DeclareMathOperator{\tr}{tr}
\DeclareMathOperator{\adj}{adj}
\DeclareMathOperator{\Hom}{Hom}
\DeclareMathOperator{\Res}{Res}
\DeclareMathOperator{\End}{End}

\makeatletter
\def\th@remark{%
  \thm@headfont{\bfseries}%
  \normalfont
  \thm@preskip\topsep \divide\thm@preskip\tw@
  \thm@postskip\thm@preskip
}
\makeatother

\begin{document}

\title[Brauer--Kuroda relations for ramified graph covers]
{Brauer--Kuroda relations for ramified graph covers}

\author{Kosuke Mizuno} 

\address{GRADUATE SCHOOL OF MATHEMATICS, NAGOYA UNIVERSITY, FURO-CHO, CHIKUSA-KU, \newline NAGOYA 464-8601, JAPAN}
\email{kosuke.mizuno.c1@math.nagoya-u.ac.jp}

\keywords{Graph Theory, Spanning trees, ramified covers, Brauer--Kuroda relations}
\subjclass[2020]{Primary 05C25; Secondary 11R29, 11R32}

\begin{abstract}
We establish graph-theoretic analogues of the Brauer--Kuroda relations for ramified Galois covers of finite graphs with arbitrary finite Galois groups.
These formulas relate the numbers of spanning trees of intermediate quotient graphs and recover the corresponding formulas for unramified covers.
Our proofs use \(h\)-functions defined on multiplicity spaces of complex representations of the Galois group.
\end{abstract}
\maketitle 
\tableofcontents 

\section{Introduction}

Let \(X\) be a finite connected graph, where a graph means a finite undirected multigraph, possibly with loops and multiple edges. Let \(\kappa(X)\) denote the number of spanning trees of \(X\), called the complexity of \(X\). This number equals the order of the degree-zero Picard group (or Jacobian group) of \(X\), which plays a role analogous to the ideal class group of a number field. In this analogy, \(\kappa(X)\) corresponds to the class number. For an unramified Galois cover with Galois group \((\bbZ/2\bbZ)^m\), Hammer, Mattman, Sands, and Vallières obtained a formula expressing the complexity of the covering graph in terms of the complexities of its intermediate double covers \cite[Theorem~3.6 and Remark~3.7]{HMSV24}. This formula is a graph-theoretic analogue of Kuroda's class number formula in algebraic number theory (For the number-theoretic formula, see \cite{Lem94}).

In \cite{Miz26}, we extend this formula to unramified \(G\)-covers for arbitrary finite groups \(G\), and establish graph-theoretic analogues of the Brauer--Kuroda relations. The purpose of this paper is to establish Brauer--Kuroda relations for ramified \(G\)-covers of finite graphs.

For a finite group \(G\), we consider \(G\)-covers \(Y/X\) in the sense of \cref{dfn:Galois}.
For a subgroup \(H\subseteq G\), let \(X_H\coloneq H\backslash Y\) be the quotient graph, so that \(Y/X_{H}\) is an \(H\)-cover.
For \(v\in V_X\), let \(m_v(Y/X)\) denote the ramification index at a vertex above \(v\). We define the ramification factor 
\[
f_H(Y/X)\coloneq\frac{\displaystyle\prod_{v\in V_X}m_v(Y/X)}{\displaystyle\prod_{x\in V_{X_H}}m_x(Y/X_H)}.
\]

Our first main result is a Kuroda-type formula.
Let
\[
\calH_G\coloneq\{\Ker\rho\mid\rho\in\Irr(G)\},
\]
where \(\Irr(G)\) denotes the set of irreducible complex representations of \(G\). 
Adjoining the empty set \(\emptyset\) to \(\calH_G\), we define
\[
\underline{\calH}_G \coloneq \calH_G\sqcup\{\emptyset\},
\]
which we order by inclusion.

\begin{thm}[\(=\) \cref{main3}]\label{main3n}
Let \(G\) be a finite group, and let \(\underline{\calH}_{G}\) be the partially ordered set defined above. 
Let \(\mu\colon \underline{\calH}_{G}\times \underline{\calH}_{G}\to \bbZ\) be the M\"obius function of \(\underline{\calH}_{G}\) (see \cref{dfn:mob}). 
Then, for any \(G\)-cover \(Y/X\), we have
\[
\kappa(Y)=
\frac{1}{|G|}
\left(\prod_{v\in V_X}m_v(Y/X)\right)
\prod_{H\in \calH_G}
\left( \frac{[G:H]\kappa(X_H)}{f_{H}(Y/X)}
\right)^{-\mu(\emptyset,H)}.
\]
\end{thm}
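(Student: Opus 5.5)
Each \(\kappa(X_H)\) will be written as a product of ``\(h\)-values'' indexed by the irreducible representations of \(G\), and the Möbius function will then isolate the contributions of the nontrivial irreducibles. The statement only involves \(\kappa\) of quotient graphs, ramification indices and the poset \(\underline{\calH}_G\), so a factorization of this kind is the natural route.

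\textbf{Step 1 (\(h\)-functions).} For each \(\rho\in\Irr(G)\) I define \(h(\rho)\) as a suitably normalized determinant of the reduced Laplacian of \(Y\), restricted to the \(\rho\)-isotypic part of the vertex space \(\bbC^{V_Y}\). In the ramified case this means the multiplicity space \(\Hom_G(\rho,\bbC^{V_Y})\). The key point is that the vertex space is \(\bigoplus_{v\in V_X}\Ind_{G_w}^G\mathbf 1\), where \(G_w\) is the stabilizer of a vertex \(w\) over \(v\), of order \(m_v(Y/X)\). So the multiplicity of \(\rho\) is \(\sum_v \dim\rho^{G_w}\), not \(|V_X|\dim\rho\).

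\textbf{Step 2 (factorization for quotients).} For every subgroup \(H\) I would prove
\[
[G:H]\,\kappa(X_H)\,\frac{\prod_{x\in V_{X_H}}m_x(Y/X_H)}{\prod_v m_v(Y/X)}\cdot(\text{const})=\prod_{\rho\ne 1}h(\rho)^{\dim\rho^H}
\]
by the same method, and in particular for \(H=1\):
\[
|G|\,\kappa(Y)=\Bigl(\prod_v m_v(Y/X)\Bigr)\prod_{\rho\ne 1}h(\rho)^{\dim\rho}.
\]
The method is the matrix-tree theorem for \(X_H\): identify \(\bbC^{V_{X_H}}\) with the \(H\)-invariants of \(\bbC^{V_Y}\), decompose into isotypic pieces, and account for how the Laplacian of \(X_H\) compares with the restriction of the Laplacian of \(Y\). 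The comparison involves the weights coming from stabilizer sizes, and that is where \(f_H(Y/X)\) arises. I would normalize \(h\) so that the trivial representation contributes \(\kappa(X)\) and all stabilizer factors are absorbed as above.

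\textbf{Step 3 (Möbius inversion).} Since \(\dim\rho^H\) depends only on whether \(H\subseteq\Ker\rho\) once \(\rho\) is grouped by kernel, I group the irreducibles by \(K=\Ker\rho\) and set \(g(K)=\prod_{\Ker\rho=K}h(\rho)^{\dim\rho}\). Step 2 with \(H\in\calH_G\) gives \(F(H)=\prod_{K\supseteq H}g(K)\), where \(F(H)=[G:H]\kappa(X_H)/f_H(Y/X)\). This statement needs checking: \(\rho^H\) is either \(0\) or all of \(\rho\) only when \(H\) is normal. Every kernel is normal, so for \(H\in\calH_G\) the claim holds. I then append \(\emptyset\) as a bottom element with \(F(\emptyset)=|G|\kappa(Y)/\prod_v m_v\), which equals \(\prod_{\text{all }K}g(K)\) by the \(H=1\) formula. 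Möbius inversion on \(\underline{\calH}_G\) yields \(\prod_{K\in\calH_G}F(K)^{\mu(\emptyset,K)}\cdot F(\emptyset)=1\) up to the top-element bookkeeping. The kernel \(G\), coming from the trivial representation, is handled since \(F(G)=\kappa(X)\) and \(f_G=1\). Rearranging gives the displayed formula.

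\textbf{Main obstacle.} I expect Step 2 to be the hardest: getting the exact ramification constants right in the comparison of the determinant for \(X_H\) with the product of \(h(\rho)\). Ramified vertices make the Laplacian of \(X_H\) differ from the naive push-forward, and the stabilizers vary. I would first handle the case \(H=1\) versus \(H=G\) explicitly via the reduced Laplacian over the orbit decomposition, and then treat general \(H\) by applying that case to the \(H\)-cover \(Y/X_H\) and the induced cover \(X_H\to X\).
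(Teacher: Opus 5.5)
Your proposal is correct, and its overall architecture is the paper's. Step~3 is the paper's proof of \cref{main3} almost verbatim: your \(F\) and \(g\) are the paper's \(g\) and \(f\), including \(g(\emptyset)=1\) and the use of normality of kernels.

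The real difference is how you obtain the intermediate factorization
\[
[G:H]\,\kappa(X_H)=f_H(Y/X)\,\kappa(X)\prod_{\rho\in\Irr(G)\setminus\{\mathbf{1}_G\}}h_{Y/X}(\rho,1)^{\dim V_\rho^H}
\]
(\cref{lem:main}).

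\textbf{The paper's route.} It proves \cref{main2} once and applies it to both \(Y/X\) and \(Y/X_H\). It then identifies the ratio of the two \(h\)-products using \(\Ind_H^G\reg_H\cong\reg_G\), the decomposition of \(\Ind_H^G\mathbf{1}_H\), and the additivity and induction properties of \(h\).

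\textbf{Your route.} Your direct argument also works, and it does everything in one computation. First, \(\bbC[V_Y]^H\cong\bigoplus_{\rho}V_\rho^H\otimes\calM_\rho(Y)\), with \(L_Y\) acting as \(\mathrm{id}\otimes L_\rho\). Second, in the basis of \(H\)-orbit sums, \(L_Y|_{\bbC[V_Y]^H}=C_{Y/X_H}L_{X_H}\); this is \cref{prp:triv} applied to the \(H\)-cover \(Y/X_H\). Hence
\[
\det(tI-C_{Y/X_H}L_{X_H})=\prod_\rho\det(tI-L_\rho)^{\dim V_\rho^H},
\]
and the trivial factor on the right is \(\det(tI-C_{Y/X}L_X)\). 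Now compare \(t\)-coefficients, using
\[
[t]\det(tI-C_{Y/X_H}L_{X_H})=(-1)^{|V_{X_H}|-1}\kappa(X_H)\prod_x m_x(Y/X_H)\sum_x m_x(Y/X_H)^{-1}
\]
and \(\sum_x m_x(Y/X_H)^{-1}=|V_Y|/|H|\). This gives the display for every subgroup \(H\); the case \(H=\{1_G\}\) recovers \cref{main2}.

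\textbf{What each buys.} Your route needs only \(u=1\) and the isotypic decomposition. The paper's route costs an extra comparison step, but it establishes the Artin formalism for the full polynomials \(h_{Y/X}(\rho,u)\), which it wants for its own sake.

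\textbf{Bookkeeping to fix.}
\begin{enumerate}
\item No normalization of \(h(\rho)\) is needed for \(\rho\neq\mathbf{1}_G\). Take \(h(\rho)=\det L_\rho=h_{Y/X}(\rho,1)\), which is nonzero because \(Y\) is connected. All ramification factors come from the trivial component.
\item Your unspecified constant is \(1/\kappa(X)\). So your displayed \(H=\{1\}\) formula is missing the factor \(\kappa(X)\); equivalently, the product should run over all \(\rho\) with \(h(\mathbf{1}_G)=\kappa(X)\). As written, the identity \(F(\emptyset)=\prod_K g(K)\) in Step~3 would be off by \(\kappa(X)\).
\item Drop the fallback through \emph{the induced cover \(X_H\to X\)}. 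For non-normal \(H\) this is not a Galois cover. For normal \(H\) you would need inflation, which fails for ramified covers: at \(u=1\) one gets
\[
h_{Y/X}(\mathrm{Infl}\,\bar\rho,1)=\det\bigl((C_{Y/X_H})_{\bar\rho}\bigr)\,h_{X_H/X}(\bar\rho,1),
\]
so extra ramification factors would have to be tracked. The correct link between \(Y/X_H\) and \(Y/X\) is the induction property (\cref{thm:induction-h-function}), not a cover \(X_H\to X\).
\end{enumerate}
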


Our second main result is a Brauer--Kuroda-type formula.
We define
\[\calC_G\coloneq\{C\mid C\text{ is a cyclic subgroup of } G\}.
\]
For \(C\in\calC_G\), define
\[
a(C)\coloneq\frac{1}{[G:C]}\sum_{\substack{B\in\calC_G\\ C\subseteq B}}\mu_{\mathrm{cl}}([B:C]),
\]
where \(\mu_{\mathrm{cl}}\) denotes the classical number-theoretic M\"obius
function.

\begin{thm}[\(=\) \cref{main4}]\label{main4n}
Let \(G\) be a finite group, and let \(\calC_G\) and \(a(C)\) be as defined above. 
Then, for any \(G\)-cover \(Y/X\), we have
\[
\kappa(X)
=
\prod_{C\in\calC_G}
\left(
\frac{[G:C]\kappa(X_C)}
{f_C(Y/X)}
\right)^{a(C)}.
\]
\end{thm}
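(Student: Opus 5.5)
The plan is to deduce \cref{main4n} from an explicit Artin-induction identity for the trivial character, combined with the multiplicative \(h\)-function on (virtual) representations of \(G\) that the paper constructs on multiplicity spaces. The properties of \(h\) I will use are the following, and I am assuming they are the earlier results on which \cref{main3n} also rests:
\begin{itemize}
\item[(a)] \(h\) is defined on the representation ring of \(G\) (that is, on virtual representations), with values in \(\bbC^\times\), and is multiplicative: \(h(V\oplus W)=h(V)h(W)\).
\item[(b)] For every subgroup \(H\subseteq G\),
\[
h\bigl(\Ind_H^G \mathbf 1\bigr)=\frac{[G:H]\,\kappa(X_H)}{f_H(Y/X)} .
\]
\end{itemize}
In particular, taking \(H=G\) gives \(X_G=X\), \([G:G]=1\) and \(f_G(Y/X)=1\). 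Hence \(h(\mathbf 1_G)=\kappa(X)\).

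Granting (a) and (b), it suffices to prove the identity of virtual characters
\[
\mathbf 1_G=\sum_{C\in\calC_G} a(C)\,\Ind_C^G\mathbf 1 .
\]
The coefficients \(a(C)\) are rational, not integral. So I will first clear denominators: apply the identity multiplied by \(|G|\), use multiplicativity of \(h\), and then take \(|G|\)-th roots of positive reals. This produces \(\kappa(X)=\prod_{C}\bigl(h(\Ind_C^G\mathbf 1)\bigr)^{a(C)}\), which is exactly the claimed formula after substituting (b).

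To prove the character identity, I evaluate both sides at \(g\in G\), using
\[
\Ind_C^G\mathbf 1(g)=\frac{1}{|C|}\,\#\{x\in G : x^{-1}gx\in C\}.
\]
Substituting the definition of \(a(C)\), the factor \(\frac{1}{[G:C]|C|}\) becomes \(\frac{1}{|G|}\). Exchanging the order of summation then gives
\[
\frac{1}{|G|}\sum_{x\in G}\ \sum_{B\in\calC_G}\ \sum_{\substack{C\subseteq B\\ y\in C}}\mu_{\mathrm{cl}}([B:C]),\qquad y=x^{-1}gx .
\]
For a fixed cyclic group \(B\ni y\), the subgroups \(C\) with \(\langle y\rangle\subseteq C\subseteq B\) correspond bijectively to the divisors \(d=[B:C]\) of \([B:\langle y\rangle]\). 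The inner sum is therefore \(\sum_{d\mid [B:\langle y\rangle]}\mu_{\mathrm{cl}}(d)\), which equals \(1\) if \(B=\langle y\rangle\) and \(0\) otherwise. If \(y\notin B\), the inner sum is empty, hence also \(0\). Consequently the sum over \(B\) equals \(1\) for every \(x\), and the whole expression equals \(1=\mathbf 1_G(g)\).

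The main obstacle is (b), specifically the ramified correction \(f_H(Y/X)\). One must check that the \(h\)-function of the permutation representation \(\Ind_H^G\mathbf 1\) really equals \([G:H]\kappa(X_H)/f_H(Y/X)\). Concretely, the ramification indices \(m_x(Y/X_H)\) must enter through the invariant-vector description \((\bbC[V_Y])^H\), using Frobenius reciprocity on multiplicity spaces together with the matrix-tree theorem for \(X_H\). I will take this from the construction of \(h\) underlying \cref{main3n}. If that construction is only stated for irreducible \(\rho\), I will extend it multiplicatively to the representation ring and verify (b) by decomposing \(\Ind_H^G\mathbf 1\) into irreducibles.
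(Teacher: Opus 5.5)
Your argument is correct and is essentially the paper's proof. Your hypothesis (b) is precisely \cref{lem:main}, provided the multiplicative function on virtual representations is normalized by \(\mathbf{1}_G\mapsto\kappa(X)\) and \(\rho\mapsto h_{Y/X}(\rho,1)\) for nontrivial irreducible \(\rho\). This normalization is needed because the paper's literal value \(h_{Y/X}(\mathbf{1}_G,1)=\det(C_{Y/X}L_X)\) is \(0\); nonvanishing for \(\rho\neq\mathbf{1}_G\) follows from \cref{main2} since \(\kappa(Y)>0\). After that, the paper combines \cref{lem:main} with Artin's induction theorem for the trivial character exactly as you do. The only differences are that you prove that instance of Artin's theorem directly by the M\"obius computation instead of citing it, and that you pass to \(|G|\)-th powers before taking positive roots, which handles the rational exponents \(a(C)\) slightly more carefully than the paper's direct manipulation.
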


If \(Y/X\) is unramified, then \(m_v(Y/X)=1\) for every \(v\in V_X\), and \(f_H(Y/X)=1\) for every subgroup \(H\subseteq G\). Therefore, \cref{main3n,main4n} recover the spanning tree formulas in \cite[Theorem~4.3 and Theorem~4.11]{Miz26}. In particular, when \(G=(\bbZ/2\bbZ)^m\), \cref{main3n,main4n} recover the formula of Hammer, Mattman, Sands, and Valli\`eres (see \cref{eg:elementary-abelian-branched,eg:elementary-abelian-artin}).

Our proofs are based on \(h\)-functions defined from the three-term determinant arising in the theory of Ihara zeta functions and Artin--Ihara
\(L\)-functions.
Zakharov introduced Ihara zeta functions and Artin--Ihara \(L\)-functions for quotient graphs of groups with trivial edge stabilizers, and established two-term and three-term determinant formulas \cite{Zak21}.
For finite abelian group actions on graphs, Gambheera and Valli\`eres developed a theory of Ihara zeta and \(L\)-functions, established the additivity and induction properties of the \(L\)-functions, and applied this theory to Iwasawa theory for ramified graph covers \cite{GV26}.

Let \(G\) be a finite group and \(Y/X\) a \(G\)-cover. We define the \(h\)-functions in terms of the
multiplicity spaces
\[
\calM_\rho(Y)\coloneq\Hom_G(V_\rho,\bbC[V_Y])
\]
for finite-dimensional complex representations \(\rho\) of \(G\), where \(\bbC[V_Y]\) is the permutation representation
on the vertices of \(Y\).
For an irreducible representation \(\rho\), the dimension of \(\calM_\rho(Y)\) is the multiplicity of \(\rho\) in \(\bbC[V_Y]\), 
and the evaluation maps give
\[
\bbC[V_{Y}]
\cong
\bigoplus_{\rho\in \Irr(G)}
V_{\rho}\otimes\calM_{\rho}(Y)
\]
The adjacency and degree operators of \(Y\) commute with the
\(G\)-action, and hence they induce endomorphisms \(A_\rho\) and \(D_\rho\) of
\(\calM_\rho(Y)\).
We define the \(h\)-function
\[
h_{Y/X}(\rho,u)\coloneq
\det\bigl(I-A_\rho u+(D_\rho-I)u^2\bigr).
\]
The \(h\)-functions are multiplicative under direct sums, and Frobenius reciprocity gives compatibility with induction from arbitrary subgroups (For ramified covers, the inflation property can fail \cite[\S5.4]{GV26}).
At \(u=1\), \(h_{Y/X}(\rho,1)\) is the determinant of the Laplacian induced on \(\calM_{\rho}(Y)\). We obtain the spanning-tree factorization in \cref{main2} by factoring the characteristic polynomial of the Laplacian and applying the matrix-tree theorem. We then combine this factorization with the multiplicativity and induction properties of the \(h\)-functions to derive relations among the complexities of intermediate quotient graphs. We use this argument to derive the Brauer--Kuroda relations stated above.
\begin{org}
The paper is organized as follows.
In Section~2, we recall the basic definitions of finite graphs, ramified coverings, and \(G\)-covers. 
In Section~3, we describe ramified covers in terms of voltage assignments with inertia subgroups and prove a connectedness criterion for the derived graphs.
In Section~4, we introduce multiplicity spaces and \(h\)-functions for \(G\)-covers. We prove a ramified Hashimoto's formula, the spanning tree factorization, and the additivity and induction properties of the \(h\)-functions.
In Section~5, we prove the main theorems.
\end{org}

\begin{ackn}
The author thanks Yusuke Nakamura for his guidance and valuable comments, and is grateful to Ryosuke Murooka and Kentaro Tanaka for helpful discussions and suggestions. The author also thanks Taiga Adachi, Sohei Tateno, Takenori Kataoka, and Takuya Saito for their comments.
This work was partially supported by JST SPRING, Grant Number JPMJSP2125.
\end{ackn}
\begin{aiuse}
The author used ChatGPT (OpenAI) for mathematical discussions, computations, and improvement of the presentation. 
\end{aiuse}
\begin{ntn}
Throughout this paper, we use the following notation.
\begin{enumerate}
\item For a finite set \(A\), we denote its cardinality by \(|A|\).
\item All representations are finite-dimensional complex representations.
\item For a finite group \(G\), we denote by \(\mathbf{1}_G\) the
one-dimensional trivial representation of \(G\).
\item We denote by \([t^n]f(t)\) the coefficient of \(t^n\) in a polynomial \(f(t)\in\bbC[t]\), where \(n\) is a non-negative integer.
\end{enumerate}
\end{ntn}
\section{Graphs and ramified covers}\label{graph}
In this section, we recall the basic definitions and notation concerning graphs and ramified covers of graphs.
\subsection{Graphs and graph morphisms}\label{dfn of graphs}
We begin by recalling the basic definitions and notation for finite graphs and graph morphisms. We follow Serre's convention (see \cite{Sun13}).
\begin{dfn}[cf.~\cite{Ser77,Sun13}]
A \tit{graph} \(X=(V_X,E_X,o,t,\iota)\) consists of two sets \(V_X\) and \(E_X\), two maps
\(o,t\colon E_X\to V_X\), and a fixed-point-free involution \(\iota\colon E_X\to E_X\)
such that \(o(\iota(e))=t(e)\) for all \(e\in E_X\).
\end{dfn}
An element of \(V_X\) is called a \tit{vertex}, and an element of \(E_X\) is called an \tit{edge}. The maps \(o\) and \(t\) are called the \tit{origin map} and \tit{terminus map}, respectively. We usually write \(\ol{e}\coloneq \iota(e)\) for \(e \in E_X\). For \(v\in V_X\), we define 
\[
E_{X}^{v}\coloneq \{\,e\in E_X \mid o(e)=v\,\},\qquad \deg_{X}(v)\coloneq |E_{X}^{v}|.
\]  
The cardinality \(\deg_{X}(v)\) is called the \tit{degree} of \(v\). 
The \tit{Euler characteristic} \(\chi(X)\) of a finite graph \(X\) is defined by 
\[
\chi(X)\coloneq |V_X|-\frac{|E_X|}{2}.
\]


For a subset \(S\subseteq E_X\), set
\(\ol{S}\coloneq\{\,\ol{e}\mid e\in S\,\}.\)
An \tit{orientation} of \(X\) is a subset \(S\subseteq E_X\) such that
\(E_X=S\sqcup\ol{S}.\)


A \textit{path} in \(X\) is a finite sequence
\(p=(e_1,e_2,\dots ,e_n)\) 
of edges in \(E_{X}\) such that
\(t(e_i)=o(e_{i+1})\)
for \(i\in \{1,\dots, n-1\}\).
We write
\(o(p)\coloneq o(e_1)\)
and
\(t(p)\coloneq t(e_n).\)
A path \(p\) is \textit{closed} if \(o(p)=t(p)\). 
If \(o(p)=t(p)=v\), we say that \(p\) is a closed path based at \(v\).
For a path \(p=(e_1,\dots, e_n)\), its reverse path is defined by
\(
\overline{p}\coloneq (\overline{e_n},\dots,\overline{e_1}).
\)
We also regard each vertex \(v\in V_X\) as a path of length zero, called
the \textit{trivial path} at \(v\), and put
\(o(v)=t(v)=v\)
and
\(\ol{v}\coloneq v.\)

We next define three matrices associated with a finite graph.
\begin{dfn}
Let \(X\) be a finite graph.

\begin{enumerate}
\item The \tit{degree matrix} \(D_X\) of \(X\) is the diagonal matrix indexed by \(V_X\) with
\[
(D_X)_{v,v}\coloneq \deg_{X}(v).
\]
\item The \tit{adjacency matrix} \(A_X\) of \(X\) is the symmetric matrix indexed by \(V_X\) with
\[
(A_X)_{u,v}\coloneq \bigl|\{\, e\in E_X \mid o(e)=u,\ t(e)=v \,\}\bigr|.
\]\item The \tit{Laplacian matrix} \(L_X\) of \(X\) is defined by
\[
L_X\coloneq D_X-A_X.
\]
\end{enumerate}
\end{dfn}

We denote by \(\kappa(X)\) the number of spanning trees of a finite connected graph \(X\).

\begin{thm}[Matrix-tree theorem, cf.~\cite{Big93}]\label{thm:mattree}
Let \(X\) be a finite connected graph. Then the following hold.
\begin{enumerate}[label=\((\arabic*)\)]
\item \(\det(L_X(v))=\kappa(X)\) for every \(v\in V_{X}\), where \(L_X(v)\) denotes the matrix obtained from \(L_X\) by deleting the row and column indexed by \(v\),
\item \([t]\det(tI-L_X)=(-1)^{|V_X|-1}|V_X|\,\kappa(X)\),
\item \(\adj(L_{X})=\kappa(X)\ J\), where \(J\) is the \(|V_{X}|\times |V_{X}|\) matrix all of whose entries are \(1\).
\end{enumerate}
\end{thm}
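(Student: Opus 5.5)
We prove the three parts in the order (1), (3), (2). Part (1) comes from the Cauchy--Binet formula, part (3) from the structure of the kernel of \(L_X\) together with (1), and part (2) from (3) through the trace of the adjugate.

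For (1), orient \(X\) by choosing an orientation \(S\subseteq E_X\) in which loops are discarded, since loops contribute nothing to \(L_X\). Let \(B\) be the \(V_X\times S\) signed incidence matrix, with \(B_{v,e}=1\) if \(o(e)=v\), \(B_{v,e}=-1\) if \(t(e)=v\), and \(0\) otherwise, for non-loop \(e\). A direct check gives \(L_X=BB^{T}\). Here each pair \(\{e,\ol{e}\}\) contributes once to \(A_X\) in each direction and twice to the degree for a loop. The loop contributions cancel between \(D_X\) and \(A_X\), since \(\deg\) counts a loop twice and \((A_X)_{v,v}\) counts it twice as well.

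Deleting row \(v\) gives \(B_v\), and \(L_X(v)=B_vB_v^{T}\). By Cauchy--Binet,
\[
\det L_X(v)=\sum_{T}\det(B_v[T])^2,
\]
where the sum runs over subsets \(T\subseteq S\) with \(|T|=|V_X|-1\). The key lemma is that \(\det B_v[T]=\pm1\) if \(T\) is the edge set of a spanning tree, and \(0\) otherwise. To prove it, note that if \(T\) is not a tree, it contains a cycle (giving a column dependency) or misses a vertex component, so the rows of some component not containing \(v\) sum to zero. If \(T\) is a tree, repeatedly expand along a leaf different from \(v\), whose row has a single \(\pm1\). This is the main technical step, and it is a standard induction.

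For (3), since \(X\) is connected, \(x^{T}L_Xx=\sum_{e\in S}(x_{o(e)}-x_{t(e)})^2\), so \(\Ker L_X\) is spanned by the all-ones vector \(\mathbf{1}\). Hence \(\operatorname{rank}L_X=|V_X|-1\), and \(\adj(L_X)\) has rank at most \(1\). From \(L_X\adj(L_X)=\adj(L_X)L_X=\det(L_X)I=0\), both the columns and the rows of \(\adj(L_X)\) lie in \(\Ker L_X\). Here we use that \(L_X\) is symmetric for the rows. Therefore \(\adj(L_X)=cJ\) for some scalar \(c\). By (1), the diagonal entries are \(\det L_X(v)=\kappa(X)\), so \(c=\kappa(X)\).

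For (2), recall that for any square matrix \(M\) of size \(n\), the coefficient of \(t\) in \(\det(tI-M)\) equals \((-1)^{n-1}\) times the sum of the principal \((n-1)\)-minors, that is, \((-1)^{n-1}\tr\adj(M)\). This follows by differentiating at \(t=0\): \(\tfrac{d}{dt}\det(tI-M)=\tr\adj(tI-M)\), and \(\adj(-M)=(-1)^{n-1}\adj(M)\). Applying this with \(M=L_X\) and using (3), we get \(\tr\adj(L_X)=|V_X|\kappa(X)\), which gives (2). The only genuine obstacle is the unimodularity and tree-criterion lemma in (1); everything else is linear algebra.
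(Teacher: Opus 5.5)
Your proof is correct. The paper does not prove this theorem but only cites Biggs, and your argument is essentially the standard textbook proof found there: the signed incidence matrix with Cauchy--Binet and the spanning-tree unimodularity lemma for the principal cofactors, the kernel argument forcing \(\adj(L_X)\) to be a multiple of \(J\), and the linear coefficient of \(\det(tI-L_X)\) read off as \((-1)^{|V_X|-1}\tr\adj(L_X)\). Your explicit check that loops cancel in \(D_X-A_X\) under Serre's convention, with parallel edges handled by indexing over an orientation, is exactly what is needed to apply the result in the paper's multigraph setting.
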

We also recall the notion of a morphism between graphs.
\begin{dfn}
Let \(Y=(V_Y, E_Y)\) and \(X=(V_X, E_X)\) be graphs. A \tit{morphism} \(f=(f_V,f_E)\colon Y\to X\) of graphs consists of maps \(f_V\colon  V_Y\rightarrow V_X\) and \(f_E\colon E_Y\rightarrow E_X\) satisfying the following conditions, for all \(e\in E_{Y}\),
 \begin{enumerate}
	\item \(f_V(o(e))=o(f_E(e))\),
	\item \(f_V(t(e))=t(f_E(e))\),
	\item \(\ol{f_E(e)}=f_E(\ol{e})\).
\end{enumerate}
A morphism \(f\) is an \tit{isomorphism} if \(f_V\) and \(f_E\) are bijective. In this case, \(Y\) and \(X\) are said to be \tit{isomorphic}. When no confusion can arise, we write \(f\) instead of \(f_V\) and \(f_E\). 
\end{dfn}
\subsection{\texorpdfstring
  {Ramified covers and \(G\)-covers}
  {Ramified covers and G-covers}}
We recall the notion of a ramified cover of finite graphs.
We adopt the convention that the empty map
\(\emptyset\to\emptyset\)
is \(1\)-to-\(1\).
\begin{dfn}[cf.~{\cite[Definition~3.1]{GV24}}]\label{dfn:ram}
Let \(X\) and \(Y\) be finite connected graphs. A morphism
\(\pi=(\pi_V,\pi_E)\colon Y\to X\) is called a \tit{cover} if the following
conditions are satisfied.
\begin{enumerate}
\item The maps
\(\pi_V\colon V_Y\to V_X\)
and
\(\pi_E\colon E_Y\to E_X\)
are surjective.
\item For each \(w\in V_Y\), there exists a positive integer
\(m_w(Y/X)\) such that 
\(\pi_{E}\colon E_{Y}^{w}\to E_{X}^{\pi_{V}(w)}\)
is \(m_{w}(Y/X)\)-to-\(1\).
\end{enumerate}

The integer \(m_w(Y/X)\) is called the \tit{ramification index} of
\(\pi\) at \(w\).
The cover \(\pi\) is called \tit{unramified} if
\(m_w(Y/X)=1\) for every \(w\in V_Y\), and \tit{ramified} otherwise.
\end{dfn}
\begin{rmk}
Our convention for the empty map \(\emptyset\to\emptyset\) ensures that the ramification index is uniquely determined when \(E_X^{\pi_V(w)}=\emptyset\).
This occurs only when both \(Y\) and \(X\) consist of a single vertex and have no edges.
\end{rmk}
\begin{dfn}[cf.~{\cite[Definition~2.5]{Kat25}}]\label{dfn:Galois}
	Let \(G\) be a finite group. A cover \(\pi\colon Y\to X\) is called a \tit{\(G\)-cover} if \(G\) acts faithfully on \(Y\) by graph automorphisms and the following conditions hold:
\begin{enumerate}
	\item Each fiber of \(\pi_{V}\) and \(\pi_{E}\) is \(G\)-stable,
	\item \(G\) acts transitively on each fiber of \(\pi_{V}\),
	\item \(G\) acts freely and transitively on each fiber of \(\pi_{E}\).
\end{enumerate}
\end{dfn}
For \(w\in V_Y\), let
\(\Stab_G(w)\coloneq\{g\in G\mid gw=w\}\)
denote the stabilizer of \(w\) in \(G\).
For \(w\in V_{Y}\), we have \(m_{w}(Y/X)=|\Stab_{G}(w)|\).
Since \(G\) acts transitively on each vertex fiber, the stabilizers of any two vertices in the same fiber are conjugate. 
Hence \(m_w(Y/X)\) is constant on each vertex fiber.
Therefore, for each \(v\in V_{X}\), we define  
\[
m_{v}(Y/X)\coloneq m_{w}(Y/X),
\]
where \(w\in\pi_{V}^{-1}(v)\). 
Since \(G\) acts transitively on every fiber of \(\pi_{V}\) and freely and transitively on every fiber of \(\pi_{E}\), 
we have
\[
|V_{Y}|=|G|\sum_{v\in V_{X}}\frac{1}{m_{v}(Y/X)},
\qquad
|E_{Y}|=|G||E_{X}|.
\]
Therefore
\begin{equation}\label{eq:euler-Y}
\chi(Y)=|G|\left(\sum_{v\in V_{X}}\frac{1}{m_{v}(Y/X)}-\frac{|E_{X}|}{2}\right).
\end{equation}
\begin{dfn}
Let \(\pi\colon Y\to X\) be a \(G\)-cover, and let \(H\leq G\).
The \tit{intermediate quotient graph} associated with \(H\) is defined by
\[
X_H\coloneq H\backslash Y.
\]
The natural quotient map \(Y\to X_H\) is an \(H\)-cover.
(cf.~\cite[Proposition~3.3]{GV24})
\end{dfn}
\section{Voltage graphs and ramified covers}
In this section, we introduce voltage graphs with inertia subgroups
and derived graphs. We also prove a connectedness criterion
for derived graphs.
\subsection{Voltage assignments and derived graphs}
We recall the construction of derived graphs from voltage assignments equipped with inertia subgroups.
\begin{dfn}
Let \(X\) be a finite connected graph and \(G\) a finite group. A map \(\alpha\colon E_X\to G\) is called a \tit{voltage assignment} if
\[
\alpha(\bar e)=\alpha(e)^{-1}
\]
for all \(e\in E_X\).
\end{dfn}
For a path \(p=(e_1,\dots,e_n)\), define the \tit{voltage} of \(p\) by \(\alpha(p)\coloneq \alpha(e_1)\cdots\alpha(e_n)\). 
The voltage of a trivial path is defined to be \(1_{G}\).
\begin{dfn}[cf.~{\cite[\S4.1]{GV24}, \cite[Definition~2.9]{Kat25}}]\label{dfn:vol}
Let \(X\) be a finite connected graph, 
let \(G\) be a finite group, 
let \(\alpha\colon E_X\to G\) be a voltage assignment,
and \(\calI=\{I_v\}_{v\in V_X}\) a family of subgroups of \(G\). 
The quadruple
\[
(X,G,\alpha,\calI)
\]
is called a \tit{voltage graph}.
Its \tit{derived graph} \(X(\alpha,\calI)\) is defined by
\[
V_{X(\alpha,\calI)}\coloneq \bigsqcup_{v\in V_X}\bigl((G/I_v)\times\{v\}\bigr),
\qquad
E_{X(\alpha,\calI)}\coloneq G\times E_X.
\]
For \((\sigma,e)\in E_{X(\alpha,\calI)}\), we define
\[
o((\sigma,e))\coloneq (\sigma I_{o(e)},o(e)),\quad
t((\sigma,e))\coloneq (\sigma\alpha(e)I_{t(e)},t(e)),\quad
\ol{(\sigma,e)}\coloneq (\sigma\alpha(e),\ol{e}).
\]
If \(I_v=\{1_G\}\) for every \(v\in V_X\), we write \(X(\alpha)\) instead of \(X(\alpha,\calI)\).
\end{dfn}

The finite group \(G\) acts naturally on the graph \(Y\coloneq X(\alpha,\calI)\) by
\begin{equation}\tag{\(\spadesuit\)}\label{eq:G-action}
\tau\cdot(\sigma I_{v},v)\coloneq (\tau\sigma I_{v},v),\qquad \tau\cdot(\sigma ,e)\coloneq(\tau\sigma ,e).
\end{equation}
\begin{prp}[cf.~{\cite[Proposition~4.1]{GV24}}]
Let \((X,G,\alpha,\calI)\) be a voltage graph. 
Suppose that its derived graph \(Y\coloneq X(\alpha,\calI)\) is connected and 
that the action {\hypersetup{linkcolor=black}\eqref{eq:G-action}} is faithful.
Let 
\(\pi\colon Y\to X\)
be the morphism given by 
\[
\pi_V\colon
V_Y\to V_X;
\quad
(\sigma I_v,v)\mapsto v,
\quad
\pi_E\colon
E_Y\to E_X;
\quad
(\sigma,e)\mapsto e.
\]
Then the following hold.
\begin{enumerate}[label=\((\arabic*)\)]
\item The morphism \(\pi\) is a \(G\)-cover with respect to the action {\hypersetup{linkcolor=black}\eqref{eq:G-action}} of \(G\). 
\item For \(w=(\sigma I_{v}, v)\in V_{Y}\), \(\Stab_{G}(w)=\sigma I_{v}\sigma^{-1}\).
\item \(m_{v}(Y/X)=|I_{v}|\) for every \(v\in V_{X}\).
\end{enumerate}
\end{prp}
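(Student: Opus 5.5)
We verify the three assertions directly from \cref{dfn:vol} and the action \eqref{eq:G-action}, starting with the routine ones. First, \eqref{eq:G-action} is an action by graph automorphisms. It is well defined on vertices because \(\tau\sigma I_v\) depends only on the coset \(\sigma I_v\). It commutes with \(o\), \(t\) and the involution, since
\[
o(\tau\sigma,e)=(\tau\sigma I_{o(e)},o(e)),\qquad t(\tau\sigma,e)=(\tau\sigma\alpha(e)I_{t(e)},t(e)),\qquad \ol{(\tau\sigma,e)}=(\tau\sigma\alpha(e),\ol e).
\]
The involution is fixed-point free because \(\ol e\neq e\), and it squares to the identity because \(\alpha(e)\alpha(\ol e)=1_G\). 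Likewise \(\pi\) is a morphism: it forgets the group coordinate, and \(o,t,\iota\) act on the \(E_X\)-coordinate exactly as in \(X\).

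Next we check the \(G\)-cover conditions. The fibers \(\pi_V^{-1}(v)=(G/I_v)\times\{v\}\) and \(\pi_E^{-1}(e)=G\times\{e\}\) are nonempty, so \(\pi\) is surjective. These fibers are visibly \(G\)-stable. \(G\) acts transitively on \(G/I_v\) by left multiplication, and freely and transitively on \(G\times\{e\}\). Faithfulness is a hypothesis, and \(Y\) is connected by hypothesis while \(X\) is connected by definition of a voltage graph.

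The main step is the local degree condition in \cref{dfn:ram}. Fix \(w=(\sigma I_v,v)\). An edge \((\tau,e)\) has origin \(w\) if and only if \(o(e)=v\) and \(\tau\in\sigma I_v\). Hence
\[
E_Y^w=\sigma I_v\times E_X^v,
\]
and \(\pi_E\) restricted to \(E_Y^w\) is the projection onto \(E_X^v\). Every fiber of this projection has exactly \(|I_v|\) elements, so the map is \(|I_v|\)-to-\(1\) and \(m_w(Y/X)=|I_v|\). If \(E_X^v=\emptyset\), the convention on empty maps still gives a unique index. Thus \(\pi\) is a cover, which proves (1).

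For (2), \(\tau w=w\) if and only if \(\tau\sigma I_v=\sigma I_v\), that is, \(\sigma^{-1}\tau\sigma\in I_v\), that is, \(\tau\in\sigma I_v\sigma^{-1}\).

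For (3), either note that the local degree computation above already gives \(m_w(Y/X)=|I_v|\) for every \(w\) over \(v\), or combine (2) with the identity \(m_w(Y/X)=|\Stab_G(w)|\) to get \(|\sigma I_v\sigma^{-1}|=|I_v|\). Either way the value is independent of the choice of \(w\) in the fiber.

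The only point needing care is the identification \(E_Y^w=\sigma I_v\times E_X^v\). It uses that the origin is \(\tau I_{o(e)}\) rather than something twisted by \(\alpha\). The remaining steps are bookkeeping.
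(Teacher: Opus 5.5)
Your proof is correct. The paper gives no argument of its own for this proposition; it simply cites \cite[Proposition~4.1]{GV24}. Your direct verification is the standard one, and the computation $E_Y^w=\sigma I_v\times E_X^v$ does the real work for both the cover condition and (3).

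The only thing to tighten is the degenerate case $E_X^v=\emptyset$. There the paper's convention forces $m_w(Y/X)=1$, so the remark that ``the convention gives a unique index'' does not by itself yield $m_w(Y/X)=|I_v|$. You also need $|I_v|=1$, and this follows from the hypotheses:
\begin{itemize}
\item $X$ is then a single vertex with no edges, so connectedness of $Y$ forces $G/I_v$ to be one point, that is, $I_v=G$;
\item faithfulness of the action on the one-vertex edgeless graph $Y$ then forces $G=\{1_G\}$.
\end{itemize}
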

Thus, every voltage graph with connected derived graph and faithful
\(G\)-action gives rise to a \(G\)-cover.
Conversely, every \(G\)-cover can be described by a voltage assignment and a family of subgroups.
\begin{prp}[{\cite[Proposition~2.11]{Kat25}}]
Let \(\pi\colon Y\to X\) be a \(G\)-cover. Then there is a voltage assignment \(\alpha\colon E_{X}\to G\) and a family \(\calI=\{I_{v}\}_{v\in V_{X}}\) of subgroups of \(G\) such that \(Y\cong X(\alpha,\calI)\) as \(G\)-covers of \(X\).
\end{prp}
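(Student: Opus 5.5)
The statement is cited from \cite[Proposition~2.11]{Kat25}. I would prove it by choosing a lift of a spanning tree and reading off voltages and stabilizers from it.

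Let \(T\) be a spanning tree of \(X\) and fix an orientation \(S\) of \(X\). Since \(\pi_E\) is surjective and \(\pi\) is a morphism, I would build a lift \(\tilde T\) of \(T\) in \(Y\) as follows. Fix a base vertex \(v_0\in V_X\) and a lift \(w_0\in\pi_V^{-1}(v_0)\). Then walk along \(T\) outward from \(v_0\). At a vertex \(v\) with chosen lift \(w_v\), and for a tree edge \(e\) with \(o(e)=v\), the map \(E_Y^{w_v}\to E_X^{v}\) is surjective because it is \(m_{w_v}\)-to-\(1\). So some \(\tilde e\in E_Y^{w_v}\) lies over \(e\). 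Set \(w_{t(e)}\coloneq t(\tilde e)\). Because \(T\) is a tree, this assigns exactly one lift \(w_v\in\pi_V^{-1}(v)\) to every vertex \(v\in V_X\).

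Put \(I_v\coloneq \Stab_G(w_v)\). For each \(e\in S\), I would choose the lift \(\tilde e\) of \(e\) with \(o(\tilde e)=w_{o(e)}\); this is possible by the local surjectivity above. Its terminus \(t(\tilde e)\) lies in the fiber over \(t(e)\), and \(G\) acts transitively on that fiber. So \(t(\tilde e)=g\,w_{t(e)}\) for some \(g\in G\), determined up to right multiplication by \(I_{t(e)}\). Define \(\alpha(e)\coloneq g\) for one such choice, and \(\alpha(\bar e)\coloneq\alpha(e)^{-1}\). Loops, where \(\bar e\neq e\) but both have origin \(w_v\), need no special treatment, because the orientation chooses one representative of each pair.

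Next I define \(\Phi\colon X(\alpha,\calI)\to Y\) by \((\sigma I_v,v)\mapsto \sigma w_v\) on vertices and by \((\sigma,e)\mapsto\sigma\tilde e\) for \(e\in S\), \((\sigma\alpha(e),\bar e)\mapsto \sigma\,\overline{\tilde e}\) on edges. The second edge rule is equivalent to \((\tau,\bar e)\mapsto \tau\alpha(e)^{-1}\overline{\tilde e}\), so it is defined on all of \(G\times\{\bar e\}\).

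The checks are then as follows.
\begin{enumerate}
\item The vertex map is well defined and injective on each fiber because \(I_v\) is exactly the stabilizer of \(w_v\). It is surjective by transitivity on vertex fibers.
\item The edge map is bijective on each fiber \(G\times\{e\}\to\pi_E^{-1}(e)\) because the action on edge fibers is free and transitive.
\item \(\Phi\) is compatible with \(o\), \(t\) and the involution. For \(e\in S\): \(o(\sigma\tilde e)=\sigma w_{o(e)}\), and \(t(\sigma\tilde e)=\sigma\alpha(e)w_{t(e)}\), which is the image of \((\sigma\alpha(e)I_{t(e)},t(e))\). The same check applied to \(\bar e\) matches the derived graph's definition.
\item \(G\)-equivariance is immediate from the formulas, and \(\pi\circ\Phi\) is the projection of the derived graph.
\end{enumerate}
Hence \(\Phi\) is an isomorphism of \(G\)-covers.

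The main obstacle is bookkeeping for the involution. The derived graph's rule \(\overline{(\sigma,e)}=(\sigma\alpha(e),\bar e)\) must agree with \(\overline{\sigma\tilde e}\). The needed identity is that \(\overline{\tilde e}\) has origin \(\alpha(e)w_{t(e)}\), so \(\alpha(e)^{-1}\overline{\tilde e}\) is the lift of \(\bar e\) starting at \(w_{t(e)}\). This is exactly why \(\alpha(\bar e)=\alpha(e)^{-1}\) is forced, and why the edge map must be defined through the orientation \(S\) rather than by independent choices for \(e\) and \(\bar e\).

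The non-uniqueness of \(\alpha(e)\) modulo \(I_{t(e)}\) is harmless, because the derived graph only uses the coset \(\sigma\alpha(e)I_{t(e)}\) in the terminus map. It could also be normalized to \(\alpha(e)=1\) for \(e\in T\), since then \(t(\tilde e)=w_{t(e)}\). That normalization is not needed for the isomorphism.
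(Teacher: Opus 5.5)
The paper gives no proof of this proposition; it is quoted from \cite[Proposition~2.11]{Kat25}, so there is no in-paper argument to compare against. Your proof is correct and is the standard construction.

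With $I_v=\Stab_G(w_v)$, transitivity gives the bijection $G/I_v\to\pi_V^{-1}(v)$, and freeness plus transitivity give the bijection $G\times\{e\}\to\pi_E^{-1}(e)$. The checks of $o$, $t$ and the involution on $G\times\{\bar e\}$ then reduce to $o(\overline{\tilde e})=t(\tilde e)=\alpha(e)w_{t(e)}$ and $t(\overline{\tilde e})=o(\tilde e)=w_{o(e)}$, as you indicate.

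A few minor remarks:
\begin{enumerate}
\item The spanning tree is superfluous: any choice of $w_v\in\pi_V^{-1}(v)$ works. The tree only matters for the optional normalization $\alpha(e)=1$ on tree edges. That normalization also requires taking $\tilde e$, for tree edges $e\in S$, to be the tree lift or its reverse; otherwise the terminus $t(\tilde e)$ need not equal $w_{t(e)}$.
\item The element $\alpha(e)^{-1}\overline{\tilde e}$ is \emph{a} lift of $\bar e$ starting at $w_{t(e)}$, not \emph{the} lift, since there are $m_{t(e)}(Y/X)$ of them.
\item To conclude that $X(\alpha,\calI)$ is a $G$-cover in the paper's sense, note explicitly that connectedness and faithfulness of the action transfer from $Y$ through the $G$-equivariant isomorphism $\Phi$ over $X$.
\end{enumerate}
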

\subsection{Connectedness of derived graphs}
Although \(X\) is assumed to be connected, the derived graph
\(X(\alpha,\calI)\) need not be connected.
In the unramified case, a connectedness criterion for derived graphs
was given by Gross and Tucker
\cite[Theorem~2.5.1]{GT87}.

The action {\hypersetup{linkcolor=black}\eqref{eq:G-action}} of \(G\) on \(Y\) extends to paths.
More precisely, if \(P=(f_1,\dots,f_n)\) is a path in \(Y\) and \(\tau\in G\), we define
\[
\tau P
\coloneq
(\tau f_{1},\dots,\tau f_n).
\]
If \(P\) is the trivial path at \(x\in V_Y\), then
\(\tau P\) is defined to be the trivial path at \(\tau\cdot x\).
Thus, if \(P\) is a path from \(x\in V_Y\) to \(y\in V_Y\), 
then \(\tau P\) is a path from \(\tau\cdot x\) to \(\tau\cdot y\). 
Moreover, \(\tau\ol{P}=\ol{\tau P}.\)

\begin{prp}\label{prp:connectedness}
Let \((X,G,\alpha,\calI)\) be a voltage graph.
Fix a vertex \(v_0\in V_X\). For each \(v\in V_X\), choose a path \(p_v\) in \(X\) from \(v_0\) to \(v\),
with \(p_{v_0}\) equal to the trivial path at \(v_0\), 
and put \(g_v\coloneq \alpha(p_v)\).
Then \(X(\alpha,\calI)\) is connected if and only if
\[
G=
\left\langle
\alpha(C),\ g_v\sigma g_v^{-1}
\ \middle|\
C \text{ is a closed path at }v_0,\ 
v\in V_X,\ \sigma\in I_v
\right\rangle.
\]
In particular, \(X(\alpha)\) is connected if and only if  
\(
G=
\left\langle
\alpha(C)
\ \middle|\
C \text{ is a closed path at }v_0\right\rangle\).
\end{prp}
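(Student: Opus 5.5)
Let \(H\) denote the subgroup on the right-hand side. We will show that the connected component \(Y_0\) of \(Y\coloneq X(\alpha,\calI)\) containing the base vertex \(w_0\coloneq(I_{v_0},v_0)\) is exactly \(\bigcup_{v}\{(\sigma I_v,v)\mid \sigma\in H g_v\}\). The statement then follows: \(Y\) is connected iff \(Y_0\) meets every fiber fully, i.e.\ iff \(Hg_vI_v=G\) for all \(v\), in particular \(HI_{v_0}=H=G\) (note \(I_{v_0}\subseteq H\) since \(g_{v_0}=1\)); and conversely \(H=G\) gives everything. The unramified case is the specialization \(I_v=\{1\}\).

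First, a path-lifting lemma. For each edge \((\sigma,e)\) of \(Y\), the endpoints are \((\sigma I_{o(e)},o(e))\) and \((\sigma\alpha(e)I_{t(e)},t(e))\). Thus a path \(p=(e_1,\dots,e_n)\) in \(X\) from \(u\) to \(v\) lifts, starting at \((\sigma I_u,u)\), to a path ending at \((\sigma\alpha(e_1)\cdots\) with modifications\(\ldots)\). More precisely, at each intermediate vertex \(x\) we may multiply on the right by any element of \(I_x\) before continuing. Hence the vertices over \(v\) reachable from \((\sigma I_u,u)\) are the classes \(\sigma\, i_0\alpha(e_1)i_1\alpha(e_2)\cdots\alpha(e_n)I_v\), with \(i_k\in I_{x_k}\) for \(x_k=t(e_k)\) and \(i_0\in I_u\). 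This is proved by induction on \(n\), and conversely every path in \(Y\) projects to a path in \(X\) and has this form.

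Next, containment. Let \(P\) be a path in \(Y\) from \(w_0\) to a vertex over \(v\). Its projection \(p\) goes from \(v_0\) to \(v\) through vertices \(x_k\), and the endpoint is \(i_0\alpha(e_1)i_1\cdots\alpha(e_n)I_v\). Rewrite each factor by inserting \(g_{x}^{-1}g_x\): the element equals
\[
\prod_k \bigl(g_{x_{k-1}}\alpha(e_k)g_{x_k}^{-1}\bigr)\bigl(g_{x_k}i_kg_{x_k}^{-1}\bigr)\cdot g_v ,
\]
with the \(i\)-factors conjugated appropriately and \(g_{x_0}=1\). Each \(g_{x}\alpha(e)g_{y}^{-1}\) is the voltage of the closed path \(p_x e\,\ol{p_y}\) at \(v_0\), and each \(g_xig_x^{-1}\) is a listed generator. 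So the endpoint lies in \(Hg_vI_v\), i.e.\ \(Y_0\subseteq\{(\sigma I_v,v):\sigma\in Hg_v\}\), where \(Hg_vI_v=Hg_v\cdot I_v\) as cosets mod \(I_v\).

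Finally, the reverse containment. It suffices to show that \((hI_{v_0},v_0)\in Y_0\) for every \(h\in H\), since then lifting \(p_v\) from there reaches \((hg_vI_v,v)\). The set \(S=\{h:(hI_{v_0},v_0)\in Y_0\}\) contains \(1\). It is closed under right multiplication by generators. For a closed path \(C\), lift from \((hI_{v_0},v_0)\) with trivial \(i_k\) to reach \((h\alpha(C)I_{v_0},v_0)\). For \(g_v\sigma g_v^{-1}\), lift \(p_v\) to reach \((hg_vI_v,v)=(hg_v\sigma I_v,v)\), then lift \(\ol{p_v}\) back to \(hg_v\sigma g_v^{-1}I_{v_0}\). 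Hence \(S\supseteq H\).

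The main obstacle is the bookkeeping in the rewriting step. It must handle the \(I\)-choices at each intermediate vertex and the reversed edges correctly: \(\alpha(\ol e)=\alpha(e)^{-1}\), and lifting the reverse edge uses \(\ol{(\sigma,e)}=(\sigma\alpha(e),\ol e)\). Finally, well-definedness modulo \(I_v\) must be checked: since \(S\) is a union of left \(I_{v_0}\)-cosets and \(I_{v_0}\subseteq H\), the cosets behave consistently.
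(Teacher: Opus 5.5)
Your proof is correct and follows essentially the same route as the paper. Your telescoping rewrite, using the closed-path voltages $\alpha(p_x e\,\overline{p_y})=g_x\alpha(e)g_y^{-1}$ and the conjugated inertia elements $g_x i g_x^{-1}$, is the paper's induction showing that every vertex reachable from $(I_{v_0},v_0)$ has the form $(hg_vI_v,v)$ with $h\in H$. Your set $S$ is exactly the paper's subgroup $R$, and it contains the generators by the same lifts of closed paths and of $p_v$ followed by $\overline{p_v}$. The one implicit step, that closure of $S$ under right multiplication by generators gives $S\supseteq H$, is harmless: $G$ is finite, and in any case the generating set is closed under inverses, since $\alpha(C)^{-1}=\alpha(\overline{C})$.
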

\begin{proof}
We set
\(
H\coloneq
\left\langle
\alpha(C),\ g_v\sigma g_v^{-1}
\ \middle|\
C \text{ is a closed path based at }v_0,\ 
v\in V_X,\ \sigma\in I_v
\right\rangle
\)
and \(Y\coloneq X(\alpha,\calI)\).
We define a subset \(R\) of \(G\)
\[
R\coloneq
\{\,r\in G\mid \text{ there exists a path from } (I_{v_0},v_0) \text{ to } (rI_{v_0},v_0) \text{ in } Y \, \}.
\]
Then \(R\) is a subgroup of \(G\).
Indeed, the trivial path at \((I_{v_0},v_0)\) shows that \(1_G\in R\).
If \(r,s\in R\), let \(P\) and \(Q\) be paths from
\((I_{v_0},v_0)\) to \((rI_{v_0},v_0)\) and
\((sI_{v_0},v_0)\), respectively. Then the concatenation of
\(P\) and \(rQ\) shows that \(rs\in R\). Similarly,
\(r^{-1}\overline{P}\) shows that \(r^{-1}\in R\).

Suppose first that \(H=G\).
If \(C\) is a closed path based at \(v_0\), 
then there is a lift of \(C\) starting at
\((I_{v_0},v_0)\) and ending at
\((\alpha(C)I_{v_0},v_0)\). Hence \(\alpha(C)\in R\).
Moreover, for \(v\in V_X\) and \(\sigma\in I_v\), there is a lift of
\(p_v\) from \((I_{v_0},v_0)\) to
\((g_vI_v,v)=(g_v\sigma I_v,v).\)
Starting from this vertex, there is a lift of \(\overline{p_v}\)
ending at
\((g_v\sigma g_v^{-1}I_{v_0},v_0).\)
Thus \(g_v\sigma g_v^{-1}\in R\), and hence \(H\subseteq R\).
Since \(H=G\), we have \(R=G\).

Let \((\tau I_v,v)\in V_Y\). Since
\(\tau g_v^{-1}\in R\), there is a path from
\((I_{v_0},v_0)\) to
\(
(\tau g_v^{-1}I_{v_0},v_0).
\)
Following this path by a lift of \(p_v\), we reach
\(
(\tau g_v^{-1}g_vI_v,v)=(\tau I_v,v).
\)
Therefore \(Y\) is connected.

Conversely, suppose that \(Y\) is connected, and let \(\tau\in G\).
Choose a path in \(Y\) from
\((I_{v_0},v_0)\) to \((\tau I_{v_0},v_0)\), with successive edges
\((\sigma_1,e_1),\ldots,(\sigma_n,e_n).\)
Put
\(v_j\coloneq t(e_j)\)
and
\(w_j\coloneq t((\sigma_j,e_j))\)
for
\(1\leq j\leq n\),
and \(w_0\coloneq(I_{v_0},v_0)\).

We claim that, for each \(j=0,\dots,n\), there exists \(h_j\in H\) such that
\[
w_j=(h_jg_{v_j}I_{v_j},v_j).
\]
This is clear for \(j=0\), since \(g_{v_0}=1_G\).
Suppose that the claim holds for some \(0\leq j<n\).
Since
\((\sigma_{j+1}I_{v_{j}},v_{j})=w_j=(h_jg_{v_j}I_{v_j},v_j),\)
there exists \(\sigma\in I_{v_j}\) such that
\(\sigma_{j+1}=h_jg_{v_j}\sigma.\)
Hence we have
\begin{align*}
w_{j+1}
&=
(h_jg_{v_j}\sigma\alpha(e_{j+1})I_{v_{j+1}},v_{j+1})\\
&=
\bigl(
h_j
(g_{v_j}\sigma g_{v_j}^{-1})
\alpha(p_{v_j}e_{j+1}\overline{p_{v_{j+1}}})
g_{v_{j+1}}I_{v_{j+1}},
v_{j+1}
\bigr).
\end{align*}
Both
\(g_{v_j}\sigma g_{v_j}^{-1}\)
and
\(\alpha(p_{v_j}e_{j+1}\overline{p_{v_{j+1}}})\)
belong to \(H\), so the claim follows by induction.

In particular,
\[
(\tau I_{v_0},v_0)=w_n=(h_ng_{v_0}I_{v_0},v_0)=(h_nI_{v_0},v_0)
\]
for some \(h_n\in H\).
Since \(I_{v_0}\subseteq H\), we obtain \(\tau\in H\).
As \(\tau\in G\) was arbitrary, \(G=H\).
\end{proof}
\begin{rmk}
The subgroup \(H\) in \cref{prp:connectedness} is independent of the
choice of the paths \(p_v\), and is independent of the base vertex
\(v_0\) up to conjugation in \(G\).
Indeed, replacing \(p_v\) by another path \(p_v'\) changes \(g_v\) by
left multiplication by \(\alpha(p_v'\ol{p_v})\), which is the voltage of a closed path
based at \(v_0\), and hence does not change the subgroup in \cref{prp:connectedness}.
If the base vertex is changed from \(v_0\) to \(v_1\), then
the subgroup \(H\) in \cref{prp:connectedness} is replaced by
\(\alpha(r)^{-1}H\alpha(r)\), where \(r\) is a path
from \(v_0\) to \(v_1\).
Thus the condition in \cref{prp:connectedness} is independent of these choices.
\end{rmk}
\begin{cor}
Let \((X,G,\alpha,\calI)\) be a voltage graph.
\begin{enumerate}[label=\((\arabic*)\)]
\item If \(X(\alpha)\) is connected, then \(X(\alpha,\calI)\) is connected (cf.~\cite[Lemma~4.4]{GV24}).
\item If there exists a totally ramified vertex \(v\in V_X\), that is, \(I_v=G\), then \(X(\alpha,\calI)\) is connected.
\item If \(G\) is abelian and \(\left\langle I_v\mid v\in V_X\right\rangle=G,\)
then \(X(\alpha,\calI)\) is connected.
\end{enumerate}
\end{cor}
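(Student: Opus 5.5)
All three parts will follow from \cref{prp:connectedness}. By that result, \(X(\alpha,\calI)\) is connected exactly when the subgroup
\[
H=\left\langle \alpha(C),\ g_v\sigma g_v^{-1} \ \middle|\ C \text{ closed at } v_0,\ v\in V_X,\ \sigma\in I_v\right\rangle
\]
equals \(G\). So for each part it suffices to show that \(H=G\) under the given hypothesis.

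For (1), suppose \(X(\alpha)\) is connected. The ``in particular'' clause of \cref{prp:connectedness} gives \(G=\langle \alpha(C)\mid C \text{ closed at } v_0\rangle\). This group is contained in \(H\), so \(H=G\).

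For (2), let \(v\) be a vertex with \(I_v=G\). The conjugates \(g_v\sigma g_v^{-1}\) with \(\sigma\in G\) run over all of \(G\). Hence \(H\supseteq g_vGg_v^{-1}=G\), and \(H=G\).

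For (3), \(G\) is abelian, so \(g_v\sigma g_v^{-1}=\sigma\) for every \(v\in V_X\) and \(\sigma\in I_v\). Thus \(H\) contains \(\langle I_v\mid v\in V_X\rangle\), which equals \(G\) by hypothesis, and again \(H=G\).

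There is no genuine obstacle. The only point needing care is that conjugation by \(g_v\) is an automorphism of \(G\): this gives surjectivity in (2), and it is trivial in (3) because \(G\) is abelian. Here we are free to fix \(v_0\) and the paths \(p_v\) arbitrarily, since by the preceding remark the condition in \cref{prp:connectedness} does not depend on these choices.
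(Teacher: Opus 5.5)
Your proposal is correct and follows the paper's approach: the paper says only that all three assertions follow immediately from \cref{prp:connectedness}, and your part-by-part check that the generating set contains the full group is exactly the intended verification. One small simplification: \cref{prp:connectedness} already holds for any choice of \(v_0\) and of the paths \(p_v\), so you do not need the independence remark.
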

\begin{proof}
These assertions follow immediately from \cref{prp:connectedness}.
\end{proof}
\section{\texorpdfstring{\(h\)-functions for ramified \(G\)-covers}{h-functions for ramified G-covers}}
In this section, we introduce \(h\)-functions associated with ramified \(G\)-covers and study representation-theoretic properties of \(h\)-functions.
\subsection{\texorpdfstring
  {Multiplicity spaces and \(h\)-functions}
  {Multiplicity spaces and h-functions}}

We introduce the multiplicity spaces associated with
representations of \(G\) and define the \(h\)-functions.


Let \(Y/X\) be a \(G\)-cover.
We set
\[
\bbC[V_{Y}]\coloneq \bigoplus_{w\in V_{Y}}\bbC w,
\]
which is the permutation representation associated with the action of \(G\) on \(V_{Y}\).
Since \(G\) acts on \(Y\) by graph automorphisms, the degree, adjacency, and Laplacian operators are \(G\)-equivariant,
that is, we have \(D_Y,A_Y,L_Y\in \End_G(\bbC[V_Y]).\)
\begin{dfn}
Let \(\rho\) be a representation of \(G\) with representation space \(V_\rho\).
Define the multiplicity space of \(\rho\) in \(\bbC[V_{Y}]\) by
\[
\calM_{\rho}(Y)\coloneq \Hom_{G}(V_{\rho},\bbC[V_{Y}]).
\]
We put \(d_{\rho}\coloneq\dim_{\bbC} V_{\rho}\) and \(s_{\rho}\coloneq \dim\calM_{\rho}(Y)\).
Every \(G\)-equivariant endomorphism \(T\in \End_{G}(\bbC[V_{Y}])\) induces an endomorphism 
\[
T_{\rho}\colon \calM_{\rho}(Y) \to \calM_{\rho}(Y);
\quad
F\mapsto T\circ F.
\]
The endomorphisms induced by \(D_Y\), \(A_Y\), and \(L_Y\) are denoted by \(D_\rho\), \(A_\rho\), and \(L_\rho\), respectively.
We define a \tit{\(h\)-function} associated with \(\rho\) by
\[
h_{Y/X}(\rho,u)\coloneq \det(I-A_{\rho}u+(D_{\rho}-I)u^2)\in \bbC[u].
\]
\end{dfn}
\begin{lem}\label{lem:dimension-multiplicity}
Let \(Y/X\) be a \(G\)-cover. For each \(v\in V_X\), choose
\(w_v\in\pi_V^{-1}(v)\).
Then we have the following.
\begin{enumerate}[label=\((\arabic*)\)]
\item For every representation \(\rho\) of \(G\), we obtain
\[
s_\rho=\sum_{v\in V_X}\dim V_\rho^{\Stab_G(w_v)}.
\]
In particular, we have \(s_{\mathbf{1}_G}=|V_X|\).
\item The number of vertices of \(Y\) is given by
\[
|V_Y|=\sum_{\rho\in\Irr(G)}d_\rho s_\rho.
\]
\end{enumerate}
\end{lem}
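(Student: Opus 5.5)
The plan is to decompose the permutation representation \(\bbC[V_Y]\) orbit by orbit, and then read off both statements from this decomposition. Since \(G\) acts transitively on each fiber \(\pi_V^{-1}(v)\), we have a \(G\)-equivariant decomposition
\[
\bbC[V_Y]=\bigoplus_{v\in V_X}\bbC[\pi_V^{-1}(v)].
\]
Each summand is the permutation representation on a transitive \(G\)-set with point stabilizer \(S_v\coloneq\Stab_G(w_v)\). Hence
\[
\bbC[\pi_V^{-1}(v)]\cong\Ind_{S_v}^G\mathbf{1}_{S_v},
\]
via \(gw_v\mapsto g\otimes 1\). Because \(\Hom_G(V_\rho,-)\) is additive, \(\calM_\rho(Y)\) is the direct sum of the spaces \(\Hom_G(V_\rho,\Ind_{S_v}^G\mathbf{1})\).

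For part (1), I will apply Frobenius reciprocity in the form
\[
\Hom_G(V_\rho,\Ind_{S}^G\mathbf{1})\cong\Hom_{S}(\Res_S V_\rho,\mathbf{1}).
\]
The right-hand side is the dual of the coinvariants \((V_\rho)_S\). By complete reducibility over \(\bbC\), its dimension equals \(\dim V_\rho^{S}\). Alternatively, one can argue directly. A \(G\)-map \(F\colon V_\rho\to\bbC[Gw_v]\) is determined by the coefficient functional \(x\mapsto\) (coefficient of \(w_v\) in \(F(x)\)). This functional is \(S_v\)-invariant, so it lies in \(((V_\rho)^*)^{S_v}\), and \(\dim (V_\rho^*)^{S}=\dim V_\rho^{S}\) because the characters are complex conjugate and the dimension of invariants is the real number \(\frac{1}{|S|}\sum_{s\in S}\chi_\rho(s)\). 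Summing over \(v\in V_X\) gives the formula. For \(\rho=\mathbf{1}_G\), every summand is \(1\), so \(s_{\mathbf{1}_G}=|V_X|\).

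For part (2), I will use the isotypic decomposition recorded in the introduction, namely that evaluation gives an isomorphism
\[
\bbC[V_Y]\cong\bigoplus_{\rho\in\Irr(G)}V_\rho\otimes\calM_\rho(Y),
\]
a standard consequence of Schur's lemma and complete reducibility. Taking dimensions yields \(|V_Y|=\sum_{\rho}d_\rho s_\rho\). As a cross-check, one can also derive this from part (1): \(\sum_\rho d_\rho\dim V_\rho^{S_v}=[G:S_v]\), since \(\Ind_{S_v}^G\mathbf{1}\) contains \(\rho\) with multiplicity \(\dim V_\rho^{S_v}\). Summing then gives \(\sum_v |G|/m_v(Y/X)=|V_Y|\).

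The only mildly delicate step is the identification of \(\Hom_{S}(\Res V_\rho,\mathbf{1})\), or of the dual invariants, with \(V_\rho^{S}\) in dimension. This is handled by the character-average formula above. Everything else is formal.
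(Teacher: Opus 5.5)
Your proposal is correct and follows essentially the same route as the paper: you decompose \(\bbC[V_Y]\) fiber by fiber into \(\Ind_{\Stab_G(w_v)}^G\mathbf{1}\), apply Frobenius reciprocity for (1), and take dimensions in the evaluation-map decomposition \(\bbC[V_Y]\cong\bigoplus_\rho V_\rho\otimes\calM_\rho(Y)\) for (2). Your character-average argument identifying \(\dim\Hom_S(V_\rho,\mathbf{1})\) with \(\dim V_\rho^{S}\) makes explicit a step the paper leaves unstated.
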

\begin{proof}
Since \(G\) acts transitively on each fiber \(\pi_V^{-1}(v)\) with
stabilizer \(I_v\coloneq \Stab_G(w_v)\), we have
\[
\bbC[V_Y]
\cong
\bigoplus_{v\in V_{X}}
\bbC[\pi^{-1}(v)]
\cong
\bigoplus_{v\in V_{X}}
\bbC[G/I_{v}]
\cong
\bigoplus_{v\in V_X}
\Ind_{I_v}^{G}(\mathbf{1}_{I_v}).
\]
as \(\bbC[G]\)-modules.
Hence, by Frobenius reciprocity,
\begin{align*}
s_\rho
&=
\dim\Hom_G(V_\rho,\bbC[V_Y])\\
&=
\sum_{v\in V_X}
\dim\Hom_G
\bigl(
V_\rho,\Ind_{I_v}^{G}(\mathbf{1}_{I_v})
\bigr)\\
&=
\sum_{v\in V_X}
\dim\Hom_{I_v}
\bigl(
V_\rho,\mathbf{1}_{I_v}
\bigr)\\
&=
\sum_{v\in V_X}\dim V_\rho^{I_v},
\end{align*}
which proves (1).
We remark that the right-hand side of (1) is independent of the choice of \(w_v\), since the stabilizers of two vertices in the same fiber are conjugate in \(G\).

Finally, the evaluation maps induce a \(G\)-equivariant isomorphism
\[
\bbC[V_Y] \cong \bigoplus_{\rho\in\Irr(G)} V_\rho\otimes\calM_\rho(Y).
\]
Taking dimensions yields
\[
|V_Y|=\sum_{\rho\in\Irr(G)}d_\rho s_\rho,
\]
which proves (2).
\end{proof}
\subsection{\texorpdfstring
  {\(h\)-functions for the trivial representation}
  {h-functions for the trivial representation}}
We compute the \(h\)-function associated with the trivial representation and prove a ramified Hashimoto's formula.
\begin{dfn}[\cite{Zak21}]
Let \(Y/X\) be a \(G\)-cover. The \tit{charge matrix} \(C_{Y/X}\) of \(Y/X\) is the diagonal matrix indexed by \(V_X\) defined by
\[
(C_{Y/X})_{v,v}\coloneq m_v(Y/X).
\]
\end{dfn}
\begin{prp}\label{prp:triv}
Let \(Y/X\) be a \(G\)-cover. Then we have
\(
D_{\mathbf{1}_{G}}=C_{Y/X}D_{X}
\)
and
\(
A_{\mathbf{1}_{G}}=C_{Y/X}A_{X}
\)
Hence we obtain
\[
h_{Y/X}(\mathbf{1}_G,u)=\det\left(I-C_{Y/X}A_Xu+(C_{Y/X}D_X-I)u^2\right).
\]
\end{prp}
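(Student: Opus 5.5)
We need to identify \(\calM_{\mathbf 1_G}(Y)=\Hom_G(\mathbf 1_G,\bbC[V_Y])\) with the invariants \(\bbC[V_Y]^G\). We then compute the matrices of \(D_Y\) and \(A_Y\) on it in a convenient basis. For each \(v\in V_X\), let \(\phi_v\coloneq\sum_{w\in\pi_V^{-1}(v)} w\) be the fiber sum. Since \(G\) acts transitively on each vertex fiber, the elements \(\{\phi_v\}_{v\in V_X}\) form a basis of \(\bbC[V_Y]^G\), which matches \(s_{\mathbf 1_G}=|V_X|\) from \cref{lem:dimension-multiplicity}. The map \(F\mapsto F(1)\) identifies \(\calM_{\mathbf 1_G}(Y)\) with this space, and \(T_{\mathbf 1_G}\) becomes the restriction of \(T\) to \(\bbC[V_Y]^G\). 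So it suffices to compute \(D_Y\phi_v\) and \(A_Y\phi_v\) in the basis \(\{\phi_v\}\).

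\emph{Degree operator.} For \(w\) above \(v\), the map \(E_Y^w\to E_X^v\) is \(m_v(Y/X)\)-to-\(1\) by \cref{dfn:ram}, so \(\deg_Y(w)=m_v\deg_X(v)\). Hence \(D_Y\phi_v=m_v\deg_X(v)\phi_v\), and the matrix is \(C_{Y/X}D_X\).

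\emph{Adjacency operator.} Use \(A_Y w=\sum_{f\in E_Y^w} t(f)\), which is consistent with the symmetric matrix convention. Then
\[
A_Y\phi_v=\sum_{w\in\pi_V^{-1}(v)}\ \sum_{f\in E_Y^w} t(f)=\sum_{e\in E_X^v}\ \sum_{f\in\pi_E^{-1}(e)} t(f).
\]
Fix \(e\) with \(t(e)=u\). The group \(G\) acts simply transitively on \(\pi_E^{-1}(e)\), so as \(f\) runs over this fiber, \(t(f)\) runs over \(\pi_V^{-1}(u)\). Each vertex is hit \(|G|/|\pi_V^{-1}(u)|=|\Stab_G(w')|=m_u\) times. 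Hence the inner sum is \(m_u\phi_u\), and
\[
A_Y\phi_v=\sum_{u}(A_X)_{v,u}\,m_u\,\phi_u .
\]

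This is the main point to check. The resulting matrix, with respect to the basis \(\{\phi_v\}\), has \((u,v)\)-entry \(m_u(A_X)_{v,u}=m_u(A_X)_{u,v}\). This equals \((C_{Y/X}A_X)_{u,v}\), using the symmetry of \(A_X\). The subtle part is keeping track of whether the ramification factor lands on the row or the column index, that is, \(C A_X\) versus \(A_X C\). The orbit–stabilizer count above puts it on the target vertex \(u\), which is the row index of the column for \(\phi_v\), so we get \(C_{Y/X}A_X\).

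\emph{Conclusion.} Since \(I\) corresponds to the identity matrix in this basis, substituting these matrices into the definition of \(h_{Y/X}(\mathbf 1_G,u)\) gives the stated determinant.
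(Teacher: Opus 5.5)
Your proposal is correct and takes the same approach as the paper: identify $\calM_{\mathbf 1_G}(Y)$ with $\bbC[V_Y]^G$, use the fiber sums as a basis, and read off the matrices of $D_Y$ and $A_Y$ in that basis. Your degree count and orbit--stabilizer computation, which places the factor $m_u$ on the target vertex and so gives $C_{Y/X}A_X$, supply exactly the verification the paper leaves implicit in its commutative diagrams.
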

\begin{proof}
We identify \(\Hom_G(V_{\mathbf{1}_G},\bbC[V_Y])\) with \(\bbC[V_Y]^G.\)
For each \(v\in V_X\), put
\[
y_v\coloneq\sum_{w\in\pi^{-1}(v)}w.
\]
Then \(\{y_v\}_{v\in V_X}\) is a basis of \(\bbC[V_Y]^G\).
With respect to the isomorphism
\[
\bbC[V_Y]^G\to\bbC[V_X];
\quad
y_{v}\mapsto v,
\]
we have the following commutative diagrams
\[
\begin{tikzcd}
\bbC[V_Y]^G
  \arrow[r, "\cong"]
  \arrow[d, "D_{\mathbf{1}_G}"']
&
\bbC[V_X]
  \arrow[d, "C_{Y/X}D_X"]
\\
\bbC[V_Y]^{G}
  \arrow[r, "\cong"]
&
\bbC[V_X]
\end{tikzcd}
\qquad
\begin{tikzcd}
\bbC[V_Y]^G
  \arrow[r, "\cong"]
  \arrow[d, "A_{\mathbf{1}_G}"']
&
\bbC[V_X]
  \arrow[d, "C_{Y/X}A_X"]
\\
\bbC[V_Y]^{G}
  \arrow[r, "\cong"]
&
\bbC[V_X]
\end{tikzcd}
\]
Hence we have
\[
D_{\mathbf{1}_G}=C_{Y/X}D_X,
\qquad
A_{\mathbf{1}_G}=C_{Y/X}A_X,
\]
as desired.
\end{proof}
The following theorem extends Hashimoto's formula \(h_{Y/X}'(\mathbf{1}_{G},1)=-2\chi(X)\kappa(X)\) (cf.~\cite{Has90}) from unramified to ramified \(G\)-covers.
\begin{thm}\label{thm:ramified-hashimoto}
For a \(G\)-cover \(Y/X\),
\[
h_{Y/X}'(\mathbf{1}_{G},1)=-\frac{2\chi(Y)\kappa(X)}{|G|}\prod_{v\in V_X}m_{v}(Y/X).
\]
In particular, if \(Y/X\) is unramified, then \(h_{Y/X}'(\mathbf{1}_{G},1)=-2\chi(X)\kappa(X).\)
\end{thm}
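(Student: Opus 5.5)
We start from \cref{prp:triv}, which gives \(h(u)\coloneq h_{Y/X}(\mathbf{1}_G,u)=\det M(u)\) with
\[
M(u)\coloneq I-CA_Xu+(CD_X-I)u^2,\qquad C\coloneq C_{Y/X}.
\]
Differentiating,
\[
M'(u)=-CA_X+2(CD_X-I)u,
\]
so that
\[
M(1)=C(D_X-A_X)=CL_X,\qquad M'(1)=-CA_X+2CD_X-2I=C(L_X+D_X)-2I .
\]
Jacobi's formula gives \(h'(1)=\tr\bigl(\adj(M(1))\,M'(1)\bigr)\). We have \(\adj(CL_X)=\adj(L_X)\adj(C)\), and \(\adj(L_X)=\kappa(X)J\) by \cref{thm:mattree}(3). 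Also \(\adj(C)=\det(C)\,C^{-1}=\bigl(\prod_v m_v\bigr)C^{-1}\), where \(m_v\coloneq m_v(Y/X)\). Setting \(P\coloneq\prod_{v\in V_X}m_v\), it follows that
\[
h'(1)=\kappa(X)\,P\,\tr\bigl(JC^{-1}\bigl(C(L_X+D_X)-2I\bigr)\bigr)
=\kappa(X)\,P\,\bigl(\tr(J(L_X+D_X))-2\tr(JC^{-1})\bigr).
\]

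Next we evaluate the two traces. For any matrix \(B\), \(\tr(JB)\) equals the sum of all entries of \(B\). The entries of \(L_X\) sum to \(0\), since its rows sum to zero. The entries of \(D_X\) sum to \(\sum_v\deg_X(v)=|E_X|\). The entries of \(C^{-1}\) sum to \(\sum_v 1/m_v\). Hence
\[
h'(1)=\kappa(X)\,P\,\Bigl(|E_X|-2\sum_{v}\frac1{m_v}\Bigr)=-2\kappa(X)\,P\Bigl(\sum_v\frac1{m_v}-\frac{|E_X|}2\Bigr).
\]
By \eqref{eq:euler-Y} the parenthesis equals \(\chi(Y)/|G|\), which yields the claimed formula. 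In the unramified case we have \(P=1\) and \(\chi(Y)=|G|\chi(X)\), which gives Hashimoto's formula.

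The only delicate step is the adjugate product identity \(\adj(CL_X)=\adj(L_X)\adj(C)\). It holds for all square matrices by polynomial continuity, since it is clear for invertible matrices, and \(C\) is invertible anyway. The case where \(X\) has a single vertex and no edges is harmless: then \(\adj\) of a \(1\times1\) matrix is \(1\), and \(\kappa(X)=1\) is consistent with both sides.
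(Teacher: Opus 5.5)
Your proposal is correct and follows the paper's proof essentially step for step: Jacobi's formula at \(u=1\), the identity \(\adj(C_{Y/X}L_X)=\kappa(X)J\cdot\det(C_{Y/X})C_{Y/X}^{-1}\), the trace evaluations \(\tr(JL_X)=0\), \(\tr(JD_X)=|E_X|\), \(\tr(JC_{Y/X}^{-1})=\sum_v 1/m_v(Y/X)\), and finally \eqref{eq:euler-Y}. The only difference is that you state the adjugate product rule \(\adj(C_{Y/X}L_X)=\adj(L_X)\adj(C_{Y/X})\) explicitly, whereas the paper uses it without comment.
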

\begin{proof}
Put \(M(u)\coloneq I-C_{Y/X}A_Xu+(C_{Y/X}D_X-I)u^2.\)
Then, by \cref{prp:triv}, we have
\(
h_{Y/X}(\mathbf{1}_{G},u)=\det M(u).
\)
By Jacobi's formula, we have
\[
h_{Y/X}'(\mathbf{1}_{G},u)
=
\tr\Bigl(
\adj\bigl(M(u)\bigr)
\cdot
\bigl(M'(u)\bigr)
\Bigr).
\]
Since \(M(1)=C_{Y/X}L_{X}\) and \(M'(1)=-C_{Y/X}A_{X}+2(C_{Y/X}D_{X}-I)\), we obtain
\begin{align*}
h_{Y/X}'(\mathbf{1}_{G},1)
&=\tr\Bigl(
\adj(C_{Y/X}L_X)\bigl(-C_{Y/X}A_X+2(C_{Y/X}D_X-I)\bigr)
\Bigr) \\
&\overset{\mathclap{\text{\ref{thm:mattree}(3)}}}{=}
\tr\Bigl(
\bigl(\kappa(X)\cdot J\bigr)
\cdot \det(C_{Y/X})C_{Y/X}^{-1}
\cdot \bigl(-C_{Y/X}A_X+2(C_{Y/X}D_X-I)\bigr)
\Bigr)\\
&=\kappa(X)\prod_{v\in V_X}m_v(Y/X)
\tr\Bigl(J(L_X+D_X-2C_{Y/X}^{-1})\Bigr).
\end{align*}
Since
\(JL_X=0\),
\(\tr(JD_X)=|E_X|\),
and
\(\tr(JC_{Y/X}^{-1})=\sum_{v\in V_X}1/m_v(Y/X),\)
it follows that
\begin{align*}
h_{Y/X}'(\mathbf{1}_{G},1)
&=
-2\kappa(X)\prod_{v\in V_X}m_{v}(Y/X)
\left(\sum_{v\in V_X}\frac{1}{m_{v}(Y/X)}-\frac{|E_X|}{2}\right)\\
&\overset{\mathclap{\eqref{eq:euler-Y}}}{=}
-\frac{2\chi(Y)\kappa(X)}{|G|}\prod_{v\in V_X}m_{v}(Y/X).
\end{align*}

If \(Y/X\) is unramified, then \(\chi(Y)=|G|\chi(X)\)
and
\(m_v(Y/X)=1\) for every \(v\in V_X\).
Consequently, we obtain
\(h_{Y/X}'(\mathbf{1}_{G},1)=-2\chi(X)\kappa(X).\)
\end{proof}
\begin{eg}
Let \(G=\bbZ/3\bbZ\), written additively, and let \(X=K_3\) be the
complete graph on three vertices, with \(V_X=\{v_0,v_1,v_2\}.\)
Choose an orientation \(S=\{e_0,e_1,e_2\}\) of \(X\) such that
\((o(e_0),t(e_0))=(v_0,v_1),\)
\((o(e_1),t(e_1))=(v_1,v_2),\)
and
\((o(e_2),t(e_2))=(v_2,v_0).\)
Define a voltage assignment \(\alpha\colon E_X\to G\) by
\(
\alpha(e_0)=1
\)
and
\(
\alpha(e_1)=\alpha(e_2)=0.
\)
We put
\(I_{v_0}=G,\)
and
\(I_{v_1}=I_{v_2}=\{0\}.\)
Then we obtain the \(G\)-cover
\(\pi\colon Y=X(\alpha,\calI)\to X\)
shown in Figure~\ref{fig:derived-K3-Z3}.

\begin{figure}[htbp]
\centering
\begin{tikzpicture}
\node (A) at (-3,0){
\begin{tikzpicture}[
  baseline={([yshift=-0.5ex]current bounding box.center)}
]

\node[
  draw=none,
  minimum size=3cm,
  regular polygon,
  regular polygon sides=6
] (a) {};

\fill (a.center) circle[radius=1.5pt];

\foreach \x in {1,2,...,6}
  \fill (a.corner \x) circle[radius=1.5pt];

\foreach \y/\z in {1/2,3/4,5/6}{
  \path (a.center) edge (a.corner \y);
  \path (a.corner \y) edge (a.corner \z);
  \path (a.corner \z) edge (a.center);
}

\end{tikzpicture}
};
\node (C) at (0.5,0.5){\large\(\pi\)};
\draw[->,thick](-.75,0) to (1.75,0);
\node (B) at (3,0){
\begin{tikzpicture}[
  baseline={([yshift=-0.5ex]current bounding box.center)},
  scale=1.5
]

\node[
  draw=none,
  minimum size=2cm,
  regular polygon,
  regular polygon sides=3
] (b) {};

\foreach \x in {1,2,3}
  \fill (b.corner \x) circle[radius=1.5pt];

\foreach \y/\z in {1/2,2/3,3/1}
  \path (b.corner \y) edge (b.corner \z);

\end{tikzpicture}
};
\end{tikzpicture}

\caption{The derived graph
\(Y=X(\alpha,\calI)\) (left) and the base graph
\(X=K_3\) (right).}
\label{fig:derived-K3-Z3}
\end{figure}
We compute \(h_{Y/X}(\rho,u)\) for
\(\rho\in\Irr(G)\).
Let
\(\omega\coloneq e^{2\pi i/3}.\)
Let
\(\Irr(G)=\{\rho_0,\rho_1,\rho_2\}\),
where
\(\rho_j(1)=\omega^j\).
First consider the trivial representation \(\mathbf{1}_{G}\).
We have
\[
A_{X}
=
\begin{pmatrix}
0&1&1\\
1&0&1\\
1&1&0
\end{pmatrix},
\qquad
D_{X}
=
\begin{pmatrix}
2&0&0\\
0&2&0\\
0&0&2
\end{pmatrix},
\qquad
C_{Y/X}
=
\begin{pmatrix}
3&0&0\\
0&1&0\\
0&0&1
\end{pmatrix}.
\]
By \cref{prp:triv}, it follows that
\begin{align*}
h_{Y/X}(\rho_0,u)
&=
\det(I-C_{Y/X}A_{X}u+(C_{Y/X}D_{X}-I)u^{2})\\
&=
1-6u^3+5u^6.
\end{align*}

Write
\(
V_Y
=
\{c\}
\sqcup
\{a_g\mid g\in G\}
\sqcup
\{b_g\mid g\in G\},
\)
where the vertex \(c\) lies above \(v_0\), the vertices \(a_g\) lie above
\(v_1\), and the vertices \(b_g\) lie above \(v_2\).
Since
\(I_{v_0}=G\)
and
\(I_{v_1}=I_{v_2}=\{0\},\)
we have
\(
\dim\calM_{\rho_j}(Y)
=
2
\)
for \(j\in \{1,2\}\).
Thus
\[
\{a_{j}\coloneq \sum_{g\in G}\omega^{-jg}a_g,b_{j}\coloneq \sum_{g\in G}\omega^{-jg}b_g\}
\]
is a basis of \(\calM_{\rho_j}(Y)\cong\{x\in\bbC[V_Y]\mid \tau x=\rho_j(\tau)x\text{ for every }\tau\in G\}\).

Since \(A_Ya_g=c+b_g\), \(A_Yb_g=c+a_g\), and
\(\deg_Y(a_g)=\deg_Y(b_g)=2\), with respect to the basis
\((a_j,b_j)\), we have
\[
A_{\rho_j}=
\begin{pmatrix}
0&1\\
1&0
\end{pmatrix},
\qquad
D_{\rho_j}=2I_2.
\]
Therefore we have
\begin{align*}
h_{Y/X}(\rho_j,u)
&=
\det\bigl(
I-A_{\rho_j}u+(D_{\rho_j}-I)u^2
\bigr)\\
&=
1+u^2+u^4.
\end{align*}
\end{eg}
\begin{eg}\label{eg:S3}
Let
\(
G=S_3
=
\langle s,r\mid s^2=r^3=1_G,\ srs=r^{-1}\rangle.
\) 
Let \(X=K_3\) be the complete graph on three vertices, with
\(
V_X=\{v_0,v_1,v_2\}.
\)
Let \(S=\{e_0,e_1,e_2\} \) be an orientation of \(X\), where
\((o(e_0),t(e_0))=(v_0,v_1)\), \((o(e_1),t(e_1))=(v_1,v_2)\), and \((o(e_2),t(e_2))=(v_2,v_0)\).
Define the trivial voltage assignment
\[
\alpha(e_0)=\alpha(e_1)=\alpha(e_2)=1_G
\]
and put
\(I_{v_0}=\langle s\rangle\), \(I_{v_1}=\langle r\rangle\), \(I_{v_2}=\{1_G\}\).
Then we have a \(G\)-cover \(\pi\colon Y=X(\alpha,\calI)\to X\) as in Figure \ref{fig:derived-K3-S3}.
The ramification indices are
\(
m_{v_0}(Y/X)=2,
\)
\(
m_{v_1}(Y/X)=3,
\)
and
\(
m_{v_2}(Y/X)=1.
\)
Since
\(
\langle I_{v_0},I_{v_1},I_{v_2}\rangle
=
\langle s,r\rangle
=
S_3,
\)
the derived graph \(Y\) is connected by \cref{prp:connectedness}. 

\begin{figure}[htbp]
\centering
\begin{tikzpicture}
\node (A) at (-3,0){
\begin{tikzpicture}
\node[
  draw=none,
  minimum size=3cm,
  regular polygon,
  regular polygon sides=11, 
  rotate=48
] (a) {};
\foreach \x in {1,2,...,11}{
  \fill (a.corner \x) circle[radius=1.5pt];
}
\foreach \y/\z in {1/3,1/2,2/3,1/7,1/11,11/7,5/3,5/4,4/3,5/7,5/6,6/7,9/3,9/10,10/3,9/7,9/8,8/7}{
  \path (a.corner \y) edge (a.corner \z);
}
\end{tikzpicture}
};
\node (C) at (0.5,0.5){\large\(\pi\)};
\draw[->,thick](-.75,0) to (1.75,0);
\node (B) at (3,0){
\begin{tikzpicture}[
]
\node[
  draw=none,
  minimum size=2cm,
  regular polygon,
  regular polygon sides=3
] (b) {};
\foreach \x in {1,2,3}{
  \fill (b.corner \x) circle[radius=1.5pt];
}
\foreach \y/\z in {1/2,2/3,3/1}{
  \path (b.corner \y) edge (b.corner \z);
}
\end{tikzpicture}
};
\end{tikzpicture}
\caption{The derived graph
\(Y=X(\alpha,\calI)\) (left) and the base graph
\(X=K_3\) (right).}
\label{fig:derived-K3-S3}
\end{figure}

The irreducible complex representations of \(S_3\) are the trivial
representation \(\mathbf{1}_{G}\), the sign representation \(\varepsilon\), and the
standard representation \(\rho\) of degree \(2\).

To describe bases of the nontrivial multiplicity spaces, identify
\[
\bbC[V_Y]
\cong
\bbC[G/\langle s\rangle]
\oplus\bbC[G/\langle r\rangle]
\oplus\bbC[G],
\]
where the summands correspond to the fibers above
\(v_0,v_1,v_2\), respectively.
For each subgroup \(H\subseteq G\), define the \(G\)-equivariant homomorphisms
\[
q_H\colon\bbC[G]\to\bbC[G/H];
\quad
g\mapsto gH,
\quad
\iota_H\colon\bbC[G/H]\to\bbC[G];
\quad
gH\mapsto\sum_{h\in H}gh.
\]
These homomorphisms satisfy
\[
q_H\circ\iota_H=|H|\,\mathrm{id}_{\bbC[G/H]}.
\]
For \(i=0,1\), the adjacency map from the fiber above \(v_i\)
to the fiber above \(v_2\) is \(\iota_{I_{v_i}}\), and the map
in the reverse direction is \(q_{I_{v_i}}\).

First consider the trivial representation \(\mathbf{1}_G\).
We have
\[
A_X
=
\begin{pmatrix}
0&1&1\\
1&0&1\\
1&1&0
\end{pmatrix},
\qquad
D_X
=
\begin{pmatrix}
2&0&0\\
0&2&0\\
0&0&2
\end{pmatrix},
\qquad
C_{Y/X}
=
\begin{pmatrix}
2&0&0\\
0&3&0\\
0&0&1
\end{pmatrix}.
\]
By \cref{prp:triv}, we obtain
\begin{align*}
h_{Y/X}(\mathbf{1}_G,u)
&=
\det\bigl(
I-C_{Y/X}A_{X}u+(C_{Y/X}D_{X}-I)u^2
\bigr)\\
&=
1-2u^2-12u^3-2u^4+15u^6.
\end{align*}

Consider the sign representation \(\varepsilon\).
Since
\(
V_\varepsilon^{\langle s\rangle}=0
\)
and
\(
V_\varepsilon^{\langle r\rangle}=V_\varepsilon,
\)
we have \(s_{\varepsilon}=2\) by \cref{lem:dimension-multiplicity} (1),
and hence
\[
\calM_{\varepsilon}(Y)=\Hom_{G}(V_{\varepsilon},\bbC[G/\langle r\rangle])\oplus\Hom_{G}(V_{\varepsilon},\bbC[G]).
\]
Let
\(
F_0\in
\Hom_G\bigl(
V_\rho,\bbC[G/\langle s\rangle]
\bigr)
\)
be nonzero, and set
\(
F_1\coloneq \iota_{\langle s\rangle}\circ F_0
\in \Hom_G(V_\rho,\bbC[G]).
\)
Since \(q_{\langle r\rangle}\circ\iota_{\langle r\rangle}
=3\,\mathrm{id}_{\bbC[G/\langle r\rangle]}\), the map \(F_2\) is nonzero.
Thus \((F_1,F_2)\) is a basis of \(\calM_\varepsilon(Y)\).
There is no \(\varepsilon\)-component in the fiber above \(v_0\), so
\[
A_\varepsilon F_1=F_2,
\qquad
A_\varepsilon F_2=3F_1.
\]
The degrees of the vertices above \(v_1\) and \(v_2\) are
\(6\) and \(2\), respectively. Hence, with respect to this basis \((F_{1},F_{2})\),
\[
A_\varepsilon
=
\begin{pmatrix}
0&3\\
1&0
\end{pmatrix},
\qquad
D_\varepsilon
=
\begin{pmatrix}
6&0\\
0&2
\end{pmatrix}.
\]
Therefore,
\begin{align*}
h_{Y/X}(\varepsilon,u)
&=
\det\bigl(
I-A_\varepsilon u+(D_\varepsilon-I)u^2
\bigr)\\
&=
1+3u^2+5u^4.
\end{align*}

Finally, consider the standard representation \(\rho\).
Since
\(\dim V_\rho^{\langle s\rangle}=1\)
and
\(V_\rho^{\langle r\rangle}=0\), we have \(s_{\rho}=3\) by \cref{lem:dimension-multiplicity} (1),
and hence 
\[
\calM_{\rho}(Y)=\Hom_{G}(V_{\rho},\bbC[G/\langle s\rangle])\oplus\Hom_{G}(V_{\rho},\bbC[G]).
\]
Let
\(F_0\in
\Hom_G\bigl(
V_\rho,\bbC[G/\langle s\rangle]
\bigr)\)
be nonzero, and set
\(F_1\coloneq \iota_{\langle s\rangle}\circ F_0
\in \Hom_G(V_\rho,\bbC[G]).\)
The map
\[
Q\colon
\Hom_G(V_\rho,\bbC[G])
\to
\Hom_G(V_\rho,\bbC[G/\langle s\rangle]);
\quad
F\mapsto q_{\langle s\rangle}\circ F.
\]
has a two-dimensional domain and a one-dimensional codomain.
Since \(Q(F_1)=2F_0\neq 0\), it is surjective and its kernel
is one-dimensional. Choose \(0\neq F_2\in\ker Q\).
Then \(F_1\) and \(F_2\) form a basis of
\(\Hom_G(V_\rho,\bbC[G])\), so \((F_0,F_1,F_2)\) is a basis
of \(\calM_\rho(Y)\).
There is no \(\rho\)-component in the fiber above \(v_1\), so
\[
A_\rho F_0=F_1,
\qquad
A_\rho F_1=2F_0,
\qquad
A_\rho F_2=0.
\]
The degrees of the vertices above \(v_0\) and \(v_2\) are
\(4\) and \(2\), respectively. Hence, with respect to this basis \((F_{0},F_{1},F_{2})\),
\[
A_\rho
=
\begin{pmatrix}
0&2&0\\
1&0&0\\
0&0&0
\end{pmatrix},
\qquad
D_\rho
=
\begin{pmatrix}
4&0&0\\
0&2&0\\
0&0&2
\end{pmatrix}.
\]
Thus
\begin{align*}
h_{Y/X}(\rho,u)
&=
\det\bigl(
I-A_\rho u+(D_\rho-I)u^2
\bigr)\\
&=
1+3u^2+5u^4+3u^6.
\end{align*}
\end{eg}
\subsection{Spanning-tree factorization}
In this section, we derive a factorization formula for the complexity
of a \(G\)-cover in terms of the \(h\)-functions associated with
irreducible representations of \(G\).

The following formula is obtained by a representation-theoretic argument.
A module-theoretic proof was given in \cite[Proposition~4.2]{Kat26}.
\begin{thm}\label{main2}
Let \(Y/X\) be a \(G\)-cover.
Then
\[
\kappa(Y)=\frac{\kappa(X)}{|G|}
\prod_{v\in V_X}m_v(Y/X)
\prod_{\rho\in \Irr(G)\setminus\{\mathbf{1}_{G}\}}h_{Y/X}(\rho,1)^{d_{\rho}}.
\]
\end{thm}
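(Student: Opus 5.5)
The plan is to apply part (2) of \cref{thm:mattree} to \(Y\) and to compute the characteristic polynomial of \(L_Y\) through the isotypic decomposition
\[
\bbC[V_Y]\cong\bigoplus_{\rho\in\Irr(G)}V_\rho\otimes\calM_\rho(Y).
\]
Under this isomorphism \(L_Y\) acts as \(\mathrm{id}_{V_\rho}\otimes L_\rho\) on each summand, so
\[
\det(tI-L_Y)=\prod_{\rho\in\Irr(G)}\det(tI-L_\rho)^{d_\rho}.
\]
Since \(Y\) is connected, \(0\) is a simple eigenvalue of \(L_Y\), with eigenvector the constant vector. That vector lies in \(\bbC[V_Y]^G\), which is the \(\mathbf{1}_G\)-summand. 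Hence for \(\rho\neq\mathbf{1}_G\) the operator \(L_\rho\) is invertible, and \(\det(tI-L_\rho)\) has constant term \((-1)^{s_\rho}\det L_\rho\). Comparing coefficients of \(t\) then gives
\[
[t]\det(tI-L_Y)=[t]\det(tI-L_{\mathbf{1}_G})\cdot\prod_{\rho\neq\mathbf{1}_G}\bigl((-1)^{s_\rho}\det L_\rho\bigr)^{d_\rho}.
\]
Note also that \(h_{Y/X}(\rho,1)=\det(I-A_\rho+D_\rho-I)=\det L_\rho\).

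Next we compute \([t]\det(tI-L_{\mathbf{1}_G})\). By \cref{prp:triv}, \(L_{\mathbf{1}_G}=C_{Y/X}L_X\), so we need the linear coefficient of \(\det(tI-CL_X)\) with \(C=C_{Y/X}\). This coefficient is \((-1)^{n-1}\) times the sum of the principal \((n-1)\)-minors of \(CL_X\), where \(n=|V_X|\). Deleting row and column \(v\) gives the minor
\[
\Bigl(\prod_{w\neq v}m_w\Bigr)\det L_X(v)=\Bigl(\prod_{w}m_w\Bigr)\frac{\kappa(X)}{m_v},
\]
using \cref{thm:mattree}(1). Summing over \(v\),
\[
[t]\det(tI-L_{\mathbf{1}_G})=(-1)^{|V_X|-1}\kappa(X)\Bigl(\prod_v m_v\Bigr)\sum_{v}\frac1{m_v}.
\]
Equivalently, one could use Jacobi's formula together with \(\adj(L_X)=\kappa(X)J\), as in the proof of \cref{thm:ramified-hashimoto}.

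Now combine. \Cref{thm:mattree}(2) for \(Y\) gives \([t]\det(tI-L_Y)=(-1)^{|V_Y|-1}|V_Y|\kappa(Y)\). From \cref{eq:euler-Y}'s ingredient \(|V_Y|=|G|\sum_v 1/m_v\), the factor \(\sum_v1/m_v\) cancels against \(|V_Y|\) and leaves the \(1/|G|\). For the signs: by \cref{lem:dimension-multiplicity}(2) we have \(|V_Y|=\sum_\rho d_\rho s_\rho\) and \(s_{\mathbf{1}_G}=|V_X|\). Therefore
\[
(|V_X|-1)+\sum_{\rho\neq\mathbf{1}_G}d_\rho s_\rho=|V_Y|-1,
\]
so the signs on the two sides agree and cancel. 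This yields
\[
\kappa(Y)=\frac{\kappa(X)}{|G|}\prod_{v\in V_X}m_v(Y/X)\prod_{\rho\neq\mathbf{1}_G}h_{Y/X}(\rho,1)^{d_\rho}.
\]

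The main point needing care is the first step: checking that \(L_Y\) corresponds to \(\mathrm{id}\otimes L_\rho\) under the evaluation isomorphism, and that the zero eigenvalue sits only in the trivial isotypic part. Both follow from \(G\)-equivariance of \(L_Y\) and the connectedness of \(Y\). The rest is bookkeeping of coefficients and signs.
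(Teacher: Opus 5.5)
Your proposal is correct and follows essentially the same route as the paper: you factor \(\det(tI-L_Y)=\prod_{\rho}\det(tI-L_\rho)^{d_\rho}\) via the isotypic decomposition, compare coefficients of \(t\) using the matrix-tree theorem for \(Y\) and the principal minors of \(C_{Y/X}L_X\), and match signs and factors through \(|V_Y|=|V_X|+\sum_{\rho\neq\mathbf{1}_G}d_\rho s_\rho\) and \(|V_Y|=|G|\sum_v 1/m_v(Y/X)\). The differences are cosmetic: you use \(V_\rho\otimes\calM_\rho(Y)\) rather than \(\Hom_G(V_{\reg_G},\bbC[V_Y])\), and your invertibility of \(L_\rho\) for \(\rho\neq\mathbf{1}_G\) is not needed; however, your coefficient identity silently uses \([t^0]\det(tI-L_{\mathbf{1}_G})=0\), which you should state explicitly (it holds because \(\det(C_{Y/X}L_X)=0\)).
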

\begin{proof}
Let \(\reg_{G}\) be the regular representation of \(G\).
Fixing a decomposition
\[
V_{\reg_G}
\cong
\bigoplus_{\rho\in\Irr(G)}V_\rho^{\oplus d_\rho},
\]
we obtain the following commutative diagram
\[
\begin{tikzcd}[column sep=large]
\displaystyle
\bigoplus_{\rho\in\Irr(G)}
\Hom_G(V_\rho,\bbC[V_Y])^{\oplus d_\rho}
  \arrow[r, "\cong"]
  \arrow[d,
    "\bigoplus_{\rho\in\Irr(G)} L_\rho^{\oplus d_\rho}"]
&
\Hom_G(V_{\reg_G},\bbC[V_Y])
  \arrow[r, "\cong"]
  \arrow[d, "L_{\reg_G}"]
&
\bbC[V_Y]
  \arrow[d, "L_Y"]
\\
\displaystyle
\bigoplus_{\rho\in\Irr(G)}
\Hom_G(V_\rho,\bbC[V_Y])^{\oplus d_\rho}
  \arrow[r, "\cong"]
&
\Hom_G(V_{\reg_G},\bbC[V_Y])
  \arrow[r, "\cong"]
&
\bbC[V_Y].
\end{tikzcd}
\]
Hence we have
\begin{equation}\label{eq:LY-factorization}
\det(tI-L_{Y})=\prod_{\rho\in\Irr(G)}\det(tI-L_{\rho})^{d_{\rho}}
\end{equation}
We compare the coefficients of \(t\) on both sides.
By \cref{thm:mattree} (2), since \(Y\) is connected, we have
\begin{equation}\label{eq:LY-t-coefficient}
[t]\det(tI-L_{Y})=(-1)^{|V_{Y}|-1}|V_{Y}|\kappa(Y)
\end{equation}
For the trivial representation \(\mathbf{1}_{G}\), we have
\begin{align}
[t]\det(tI-L_{\mathbf{1}_{G}})
&\overset{\mathclap{\ref{prp:triv}}}{=}
[t]\det(tI-C_{Y/X}L_{X})\notag\\
&=
(-1)^{|V_{X}|-1}\sum_{v\in V_{X}}\det((C_{Y/X}L_{X})(v))\notag\\
&\overset{\mathclap{\ref{thm:mattree} (1)}}{=}
(-1)^{|V_{X}|-1}\kappa(X)\prod_{v\in V_{X}}m_{v}(Y/X)\sum_{v\in V_{X}}\frac{1}{m_{v}(Y/X)}.
\label{eq:trivial-t-coefficient}
\end{align}
We have
\begin{equation}\label{eq:nontrivial-constant}
[t^{0}]\prod_{\rho\in\Irr(G)\setminus\{\mathbf{1}_{G}\}}
\det(tI-L_{\rho})^{d_{\rho}}
=
(-1)^{\sum_{\rho\in\Irr(G)\setminus\{\mathbf{1}_{G}\}}d_{\rho}s_{\rho}}
\hspace{-4mm}
\prod_{\rho\in\Irr(G)\setminus\{\mathbf{1}_{G}\}}h_{Y/X}(\rho,1)^{d_{\rho}}
\end{equation}
Since we have \(|V_{Y}|=|V_{X}|+\sum_{\rho\in\Irr(G)\setminus\{\mathbf{1}_{G}\}}d_{\rho}s_{\rho}\) by \cref{lem:dimension-multiplicity},
it follows from 
\eqref{eq:trivial-t-coefficient},
\eqref{eq:nontrivial-constant}, and \([t^{0}]\det(tI-L_{\mathbf{1}_{G}})=0\) that
\begin{align}
&[t]\prod_{\rho\in\Irr(G)}\det(tI_{s_{\rho}}-L_{\rho})^{d_{\rho}}\notag\\
=&
\,\,(-1)^{|V_{Y}|-1}
\kappa(X)
\prod_{v\in V_{X}}m_{v}(Y/X)
\sum_{v\in V_{X}}\frac{1}{m_{v}(Y/X)}
\prod_{\rho\in\Irr(G)\setminus\{\mathbf{1}_{G}\}}h_{Y/X}(\rho,1)^{d_{\rho}}
\label{eq:t-coeff}
\end{align}
By \eqref{eq:LY-factorization}, \eqref{eq:LY-t-coefficient}, and \eqref{eq:t-coeff}, since \(|V_{Y}|=|G|\sum_{v\in V_{X}}1/m_{v}(Y/X)\),
\[
\kappa(Y)=\frac{\kappa(X)}{|G|}
\prod_{v\in V_X}m_v(Y/X)
\prod_{\rho\in \Irr(G)\setminus\{\mathbf{1}_{G}\}}h_{Y/X}(\rho,1)^{d_{\rho}}
\]
as desired.
\end{proof}
\begin{rmk}
\cref{main2} holds without assuming that
\(\chi(X)\neq 0\) or \(\chi(Y)\neq 0\).
In proofs based on Hashimoto's formula, a nonvanishing assumption
is needed when dividing by the Euler characteristic.
\end{rmk}
\subsection{\texorpdfstring
  {Artin formalism of \(h\)-functions}
  {Artin formalism of h-functions}}
We prove the Artin formalism for the \(h\)-functions by
establishing their additivity and induction properties for arbitrary finite groups.
In the finite abelian case, these properties were proved in \cite[Theorems~5.14 and 5.15]{GV26}.
\begin{prp}[Additivity]\label{prp:additivity-h-function}
Let \(\rho\) and \(\tau\) be representations of a finite group \(G\). Then
\[
h_{Y/X}(\rho\oplus\tau, u)=h_{Y/X}(\rho,u)h_{Y/X}(\tau,u).
\]
\end{prp}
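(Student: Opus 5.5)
The plan is to compare multiplicity spaces directly. The representation space of \(\rho\oplus\tau\) is \(V_\rho\oplus V_\tau\). By the universal property of the direct sum (additivity of \(\Hom_G(-,\bbC[V_Y])\) in the first variable), the map
\[
\Phi\colon \calM_{\rho\oplus\tau}(Y)=\Hom_G(V_\rho\oplus V_\tau,\bbC[V_Y])\to \calM_\rho(Y)\oplus\calM_\tau(Y);\quad F\mapsto (F\circ j_\rho,\,F\circ j_\tau)
\]
is a linear isomorphism. Here \(j_\rho\) and \(j_\tau\) are the canonical inclusions. Its inverse sends \((F_1,F_2)\) to \(F_1\circ p_\rho+F_2\circ p_\tau\), where \(p_\rho,p_\tau\) are the projections. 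Both \(F\circ j_\rho\) and \(F_1\circ p_\rho+F_2\circ p_\tau\) are \(G\)-equivariant, because \(j\) and \(p\) are \(G\)-maps.

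The key step is to check that \(\Phi\) intertwines the induced endomorphisms. For any \(T\in\End_G(\bbC[V_Y])\), the endomorphism \(T_{\rho\oplus\tau}\) acts by post-composition, \(F\mapsto T\circ F\), whereas \(\Phi\) acts by pre-composition. Since post- and pre-composition commute,
\[
\Phi(T\circ F)=(T\circ F\circ j_\rho,\,T\circ F\circ j_\tau)=(T_\rho\oplus T_\tau)(\Phi(F)).
\]
Hence \(\Phi T_{\rho\oplus\tau}\Phi^{-1}=T_\rho\oplus T_\tau\). Applying this with \(T=A_Y\), \(T=D_Y\) and \(T=\mathrm{id}\) gives the same conjugation for \(A\), \(D\) and \(I\).

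It follows that \(I-A_{\rho\oplus\tau}u+(D_{\rho\oplus\tau}-I)u^2\) is conjugate, via \(\Phi\) (extended \(\bbC[u]\)-linearly), to the block-diagonal operator
\[
\bigl(I-A_\rho u+(D_\rho-I)u^2\bigr)\oplus\bigl(I-A_\tau u+(D_\tau-I)u^2\bigr).
\]
The determinant is invariant under conjugation and multiplicative on block-diagonal operators, so \(h_{Y/X}(\rho\oplus\tau,u)=h_{Y/X}(\rho,u)\,h_{Y/X}(\tau,u)\).

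There is no real obstacle. The only point needing care is that the determinant is taken over \(\bbC[u]\), or equivalently for each specialization \(u\in\bbC\). This follows by evaluating at each \(u\) and using that two polynomials agreeing at every \(u\in\bbC\) are equal. The argument also handles the degenerate case where a multiplicity space is zero: the determinant of the empty matrix is \(1\), consistent with the formula.
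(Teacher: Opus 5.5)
Your proposal is correct and follows the paper's own argument: the paper also uses the isomorphism \(\calM_{\rho\oplus\tau}(Y)\cong\calM_\rho(Y)\oplus\calM_\tau(Y)\), checks that it intertwines \(T_{\rho\oplus\tau}\) with \(T_\rho\oplus T_\tau\) for \(T=A_Y\) and \(T=D_Y\), and then takes determinants. The only difference is that you write out the isomorphism and why pre- and post-composition commute, whereas the paper states this as a commutative diagram.
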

\begin{proof}
For any \(T\in\End_{G}(\bbC[V_{Y}])\), the following diagram commutes
\[
\begin{tikzcd}
\calM_{\rho\oplus\tau}(Y)
  \arrow[r, "\cong"]
  \arrow[d, "T_{\rho\oplus\tau}"']
&
\calM_{\rho}(Y)\oplus\calM_{\tau}(Y)
  \arrow[d, "T_\rho\oplus T_\tau"]
\\
\calM_{\rho\oplus\tau}(Y)
  \arrow[r, "\cong"]
&
\calM_{\rho}(Y)\oplus\calM_{\tau}(Y).
\end{tikzcd}
\]
Applying this to \(T=A_Y\) and \(T=D_Y\), respectively, we obtain
\[
I-A_{\rho\oplus\tau}u
 +(D_{\rho\oplus\tau}-I)u^2\\
\cong
\bigl(I-A_\rho u+(D_\rho-I)u^2\bigr)
\oplus
\bigl(I-A_\tau u+(D_\tau-I)u^2\bigr).
\]
Taking determinants, we obtain
\[
h_{Y/X}(\rho\oplus\tau,u)
=
h_{Y/X}(\rho,u)h_{Y/X}(\tau,u),
\]
as desired.
\end{proof}
\begin{prp}[Induction property]\label{thm:induction-h-function}
Let \(Y/X\) be a \(G\)-cover, let \(H\) be a subgroup of \(G\), and let
\(\tau\) be a representation of \(H\). Then
\[
h_{Y/X}\bigl(\Ind_H^G(\tau),u\bigr)
=
h_{Y/X_H}(\tau,u).
\]
\end{prp}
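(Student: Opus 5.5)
The plan is to build an explicit isomorphism $\Phi\colon\calM_{\Ind_H^G\tau}(Y)\xrightarrow{\ \sim\ }\calM_\tau(Y)$, where $\calM_\tau(Y)$ is computed for $Y$ viewed as an $H$-cover of $X_H$. We then check that $\Phi$ intertwines the operators $A$ and $D$ on the two sides. Once this is done, the matrices $I-Au+(D-I)u^2$ are conjugate, and the two $h$-functions agree.

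The first point is that the two multiplicity spaces live in the same ambient module. For the $H$-cover $Y\to X_H$, the permutation module is $\bbC[V_Y]$ itself, with $G$ replaced by $H$. The adjacency and degree operators $A_Y,D_Y$ are literally the same operators on $\bbC[V_Y]$; they are $G$-equivariant, hence $H$-equivariant. So $\calM_\tau(Y)=\Hom_H(V_\tau,\Res^G_H\bbC[V_Y])$, and the operators $A_\tau,D_\tau$ for the cover $Y/X_H$ are $F\mapsto A_Y\circ F$ and $F\mapsto D_Y\circ F$.

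Next, apply Frobenius reciprocity in the form $\Hom_G(\Ind_H^G V_\tau,W)\cong\Hom_H(V_\tau,\Res_H^G W)$ with $W=\bbC[V_Y]$. Realize $\Ind_H^G V_\tau=\bbC[G]\otimes_{\bbC[H]}V_\tau$. The map is then
\[
\Phi(F)=F\circ j,\qquad j\colon V_\tau\to \Ind_H^G V_\tau,\quad x\mapsto 1\otimes x,
\]
and its inverse sends $F'$ to $g\otimes x\mapsto gF'(x)$. This is a $\bbC$-linear isomorphism. The key observation is that $\Phi$ is natural in $W$: for any $T\in\End_G(W)$ we have $\Phi(T\circ F)=T\circ F\circ j=T\circ\Phi(F)$. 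Taking $T=A_Y$ and $T=D_Y$ gives $\Phi A_{\Ind\tau}=A_\tau\Phi$ and $\Phi D_{\Ind\tau}=D_\tau\Phi$, and the identity $I$ is preserved trivially.

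Conjugating $I-A_{\Ind\tau}u+(D_{\Ind\tau}-I)u^2$ by $\Phi$ therefore yields $I-A_\tau u+(D_\tau-I)u^2$, and taking determinants gives the claim.

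The only point needing care, and the one I expect to be the main subtle step, is checking that $D_Y$ really is the degree operator of $Y$ regarded as an $H$-cover of $X_H$. Degrees in $Y$ do not depend on which group acts, so nothing changes; the ramification data of $Y/X_H$ enters only through the structure of $\calM_\tau(Y)$, never through the operators. Hence, unlike inflation, induction needs no unramifiedness hypothesis.
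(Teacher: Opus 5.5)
Your proposal is correct and follows the paper's proof: the paper also uses that the Frobenius reciprocity isomorphism $\calM_{\Ind_H^G\tau}(Y)\cong\calM_\tau(Y)$ commutes with post-composition by any $T\in\End_G(\bbC[V_Y])$, applies this to $T=A_Y$ and $T=D_Y$, and takes determinants. Your explicit map $F\mapsto F\circ j$ and the naturality check $\Phi(T\circ F)=T\circ\Phi(F)$ simply write out the commutative diagram that the paper asserts without detail.
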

\begin{proof}
For any \(T\in\End_{G}(\bbC[V_{Y}])\), Frobenius reciprocity gives the commutative diagram
\[
\begin{tikzcd}
  \calM_{\Ind^{G}_{H}\tau}(Y) \ar[r, "\cong"] \arrow[d, "T_{\Ind_{H}^{G}(\tau)}"'] & \calM_{\tau}(Y) \ar[d, "T_{\tau}"] \\
  \calM_{\Ind^{G}_{H}\tau}(Y) \ar[r, "\cong"] & \calM_{\tau}(Y).
\end{tikzcd}
\]
Applying this to \(T=A_Y\) and \(T=D_Y\), respectively, we obtain
\[
I-A_{\Ind_H^G(\tau)}u
 +(D_{\Ind_H^G(\tau)}-I)u^2
 \cong
I-A_\tau u+(D_\tau-I)u^2.
\]
Taking determinants, we have
\[
h_{Y/X}\bigl(\Ind_H^G(\tau),u\bigr)
=
h_{Y/X_H}(\tau,u),
\]
as desired.
\end{proof}
We next use the Artin formalism to derive a formula relating the
complexities of \(X_H\) and \(X\) in terms of the \(h\)-functions.
To state the formula, we introduce the following ramification factor.
\begin{dfn}\label{dfn:ramification-factor}
Let \(Y/X\) be a \(G\)-cover. For a subgroup \(H\subseteq G\), define the \tit{ramification factor} by
\[
f_H(Y/X)
\coloneq
\frac{
\displaystyle\prod_{v\in V_X}m_v(Y/X)
}{
\displaystyle\prod_{x\in V_{X_H}}m_x(Y/X_H)
}.
\]
\end{dfn}
\begin{lem}\label{lem:main}
Let \(Y/X\) be a \(G\)-cover, and let \(H\) be a subgroup of \(G\).
For each \(\rho\in\Irr(G)\setminus\{\mathbf{1}_{G}\}\), let
\(a_\rho^H\) be the non-negative integer determined by
\[
\Ind^{G}_{H}(\mathbf{1}_{H})=\mathbf{1}_{G}\oplus \bigoplus_{\rho\in \Irr(G)\setminus\{\mathbf{1}_{G}\}}\rho^{\oplus a_{\rho}^{H}}
\]
Then we have
\[
[G:H]\kappa(X_H)
=
f_{H}(Y/X)
\kappa(X)
\hspace{-4mm}
\prod_{\rho\in\Irr(G)\setminus\{\mathbf{1}_{G}\}}
h_{Y/X}(\rho,1)^{a_\rho^H}.
\]
\end{lem}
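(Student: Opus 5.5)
The plan is to apply \cref{main2} twice, once to the \(G\)-cover \(Y/X\) and once to the \(H\)-cover \(Y/X_H\), and to compare the two factorizations using the Artin formalism. Applying \cref{main2} to \(Y/X_H\) gives
\[
\kappa(Y)=\frac{\kappa(X_H)}{|H|}\prod_{x\in V_{X_H}}m_x(Y/X_H)\prod_{\tau\in\Irr(H)\setminus\{\mathbf{1}_H\}}h_{Y/X_H}(\tau,1)^{d_\tau},
\]
and applying it to \(Y/X\) gives the analogous expression with \(\kappa(X)/|G|\), \(\prod_{v}m_v(Y/X)\), and a product over \(\Irr(G)\setminus\{\mathbf{1}_G\}\). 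Equating the two expressions for \(\kappa(Y)\) and multiplying by \(|G|\) will produce \([G:H]\kappa(X_H)\) on one side and \(f_H(Y/X)\kappa(X)\) on the other, together with ratios of \(h\)-values.

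The key step is to rewrite the \(H\)-side product in terms of \(G\)-representations. By \cref{prp:additivity-h-function} and \cref{thm:induction-h-function}, \(h_{Y/X_H}(\reg_H,u)=h_{Y/X}(\Ind_H^G\reg_H,u)=h_{Y/X}(\reg_G,u)\). Expanding both regular representations into irreducibles yields
\[
h_{Y/X_H}(\mathbf{1}_H,u)\prod_{\tau\ne\mathbf{1}_H}h_{Y/X_H}(\tau,u)^{d_\tau}=h_{Y/X}(\mathbf{1}_G,u)\prod_{\rho\ne\mathbf{1}_G}h_{Y/X}(\rho,u)^{d_\rho}.
\]
Induction from \(\mathbf{1}_H\), combined with the decomposition defining \(a^H_\rho\), gives
\[
h_{Y/X_H}(\mathbf{1}_H,u)=h_{Y/X}(\mathbf{1}_G,u)\prod_{\rho\ne\mathbf{1}_G}h_{Y/X}(\rho,u)^{a^H_\rho}.
\]
Dividing formally, the product \(\prod_{\tau\ne\mathbf{1}_H}h_{Y/X_H}(\tau,u)^{d_\tau}\) equals \(\prod_{\rho\ne\mathbf{1}_G}h_{Y/X}(\rho,u)^{d_\rho-a^H_\rho}\) as polynomials. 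This division is legitimate because \(h_{Y/X}(\mathbf{1}_G,u)\) is a nonzero polynomial, with constant term \(1\).

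Next, evaluate at \(u=1\) and substitute into the equality of the two \(\kappa(Y)\) formulas. This gives
\[
\frac{\kappa(X_H)}{|H|}\prod_x m_x(Y/X_H)\prod_{\rho\ne\mathbf{1}}h(\rho,1)^{d_\rho-a^H_\rho}=\frac{\kappa(X)}{|G|}\prod_v m_v(Y/X)\prod_{\rho\ne\mathbf{1}}h(\rho,1)^{d_\rho}.
\]
To cancel \(\prod_{\rho\ne\mathbf{1}}h(\rho,1)^{d_\rho-a^H_\rho}\) we need the values \(h_{Y/X}(\rho,1)\) to be nonzero for \(\rho\ne\mathbf{1}_G\). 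This is the main obstacle.

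To settle it, I would argue from \cref{main2} itself. Since \(Y\) is connected, \(\kappa(Y)\neq0\), and \(\kappa(X)\) and the \(m_v\) are positive, so \(\prod_{\rho\ne\mathbf{1}}h_{Y/X}(\rho,1)^{d_\rho}\neq0\). Hence each \(h_{Y/X}(\rho,1)\) with \(\rho\ne\mathbf{1}_G\) is nonzero. Equivalently, \(L_Y\) has a one-dimensional kernel spanned by the constants, which lies in the trivial isotypic part, so each \(L_\rho\) is invertible for \(\rho\ne\mathbf{1}_G\).

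After cancelling, rearranging gives
\[
[G:H]\kappa(X_H)=\frac{\prod_v m_v(Y/X)}{\prod_x m_x(Y/X_H)}\,\kappa(X)\prod_{\rho\ne\mathbf{1}_G}h_{Y/X}(\rho,1)^{a^H_\rho},
\]
which is the claimed identity with the factor \(f_H(Y/X)\). The only other point to check is the degenerate case \(H=\{1\}\), where \(\Irr(H)\setminus\{\mathbf{1}_H\}\) is empty; the formula then reads \(|G|\kappa(Y)=\dots\), consistent with \cref{main2}, so no separate argument is needed.
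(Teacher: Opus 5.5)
Your proposal is correct and follows essentially the same route as the paper. Like the paper, you apply \cref{main2} to both $Y/X$ and $Y/X_H$, and then use additivity and induction, via $\Ind_H^G\reg_H\cong\reg_G$ and the decomposition of $\Ind_H^G\mathbf{1}_H$, to identify the ratio of the nontrivial $h$-products with $\prod_{\rho\neq\mathbf{1}_G}h_{Y/X}(\rho,1)^{a_\rho^H}$. Your check that $h_{Y/X}(\rho,1)\neq0$ for $\rho\neq\mathbf{1}_G$ (from $\kappa(Y)\neq0$) makes explicit the nonvanishing that the paper uses implicitly when it divides at $u=1$.
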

\begin{proof}
Applying \cref{main2} to the \(G\)-cover
\(Y\to X\) and the \(H\)-cover \(Y\to X_H\), respectively, gives
\[
\kappa(Y)
=
\frac{\kappa(X)}{|G|}
\prod_{v\in V_{X}}m_{v}(Y/X)\prod_{\rho\in \Irr(G)\setminus \{\mathbf{1}_{G}\}}h_{Y/X}(\rho,1)^{d_{\rho}},
\]
\[
\kappa(Y)
=
\frac{\kappa(X_H)}{|H|}
\prod_{x\in V_{X_{H}}}m_{x}(Y/X_{H})\prod_{\tau\in \Irr(H)\setminus \{\mathbf{1}_{H}\}}h_{Y/X_{H}}(\tau,1)^{d_{\tau}}.
\]
Equating these two expressions for \(\kappa(Y)\) and using the definition of \(f_H(Y/X)\), we obtain
\begin{equation}\label{eq:c}
[G:H]\kappa(X_{H})=f_{H}(Y/X)\kappa(X)
\frac{
\displaystyle
\prod_{\rho\in\Irr(G)\setminus\{\mathbf{1}_G\}}
h_{Y/X}(\rho,1)^{d_\rho}
}{
\displaystyle
\prod_{\tau\in\Irr(H)\setminus\{\mathbf{1}_H\}}
h_{Y/X_H}(\tau,1)^{d_\tau}
}.
\end{equation}
Since \(\Ind_H^G(\reg_H)\cong\reg_G\), it follows from
\cref{prp:additivity-h-function,thm:induction-h-function} that
\begin{equation}\label{eq:a}
\prod_{\tau\in \Irr(H)}h_{Y/X_{H}}(\tau,u)^{d_{\tau}}=\prod_{\rho\in \Irr(G)}h_{Y/X}(\rho,u)^{d_{\rho}}.
\end{equation}
Since \(\Ind^{G}_{H}(\mathbf{1}_{H})=\mathbf{1}_{G}\oplus \bigoplus_{\rho\in \Irr(G)\setminus\{\mathbf{1}_{G}\}}\rho^{\oplus a_{\rho}^{H}}\),
the induction property and the multiplicativity of \(h\)-functions give
\begin{equation}\label{eq:b}
h_{Y/X_{H}}(\mathbf{1}_{H},u)=h_{Y/X}(\mathbf{1}_{G},u)\prod_{\rho\in \Irr(G)\setminus\{\mathbf{1}_{G}\}}h_{Y/X}(\rho,u)^{a_{\rho}^{H}}.
\end{equation}
By \eqref{eq:a} and \eqref{eq:b},
\begin{align*}
\frac{
\displaystyle
\prod_{\rho\in\Irr(G)\setminus\{\mathbf{1}_G\}}
h_{Y/X}(\rho,u)^{d_\rho}
}{
\displaystyle
\prod_{\tau\in\Irr(H)\setminus\{\mathbf{1}_H\}}
h_{Y/X_H}(\tau,u)^{d_\tau}
}
&\overset{\eqref{eq:a}}{=}
\frac{
h_{Y/X_{H}}(\mathbf{1}_{H},u)
}{
h_{Y/X}(\mathbf{1}_{G},u)
}\\
&\overset{\eqref{eq:b}}{=}
\prod_{\rho\in \Irr(G)\setminus\{\mathbf{1}_{G}\}}h_{Y/X}(\rho,u)^{a_{\rho}^{H}}.
\end{align*}
Therefore, by \eqref{eq:c} we conclude that
\[
[G:H]\kappa(X_H)
=
f_{H}(Y/X)
\kappa(X)
\hspace{-4mm}
\prod_{\rho\in\Irr(G)\setminus\{\mathbf{1}_{G}\}}
h_{Y/X}(\rho,1)^{a_\rho^H},
\]
as desired.
\end{proof}
\begin{cor}\label{cor:normal-intermediate-complexity}
Let \(Y/X\) be a \(G\)-cover, and let \(H\) be a normal subgroup of \(G\).
Then
\[
[G:H]\kappa(X_H)
=
f_H(Y/X)\kappa(X)
\prod_{\substack{
\rho\in\Irr(G)\setminus\{\mathbf{1}_G\}\\
H\subseteq\Ker\rho
}}
h_{Y/X}(\rho,1)^{d_\rho}.
\]
\end{cor}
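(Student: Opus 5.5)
This follows from \cref{lem:main} once we compute the multiplicities \(a_\rho^H\) for a normal subgroup \(H\). The only input needed is the decomposition of \(\Ind_H^G(\mathbf{1}_H)\) into irreducible representations of \(G\).

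First, for \(\rho\in\Irr(G)\), Frobenius reciprocity gives
\[
a_\rho^H=\dim\Hom_G\bigl(\rho,\Ind_H^G(\mathbf{1}_H)\bigr)=\dim\Hom_H\bigl(\Res_H\rho,\mathbf{1}_H\bigr)=\dim V_\rho^H .
\]
Since \(H\) is normal in \(G\), the subspace \(V_\rho^H\) is \(G\)-stable. Indeed, for \(g\in G\), \(h\in H\) and \(x\in V_\rho^H\), we have \(\rho(h)\rho(g)x=\rho(g)\rho(g^{-1}hg)x=\rho(g)x\). Because \(\rho\) is irreducible, it follows that either \(V_\rho^H=V_\rho\) or \(V_\rho^H=0\). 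The first case holds exactly when \(H\subseteq\Ker\rho\). Hence \(a_\rho^H=d_\rho\) if \(H\subseteq\Ker\rho\), and \(a_\rho^H=0\) otherwise.

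Substituting these values of \(a_\rho^H\) into \cref{lem:main}, all factors with \(H\not\subseteq\Ker\rho\) disappear. The remaining factors give exactly the stated product over \(\rho\in\Irr(G)\setminus\{\mathbf{1}_G\}\) with \(H\subseteq\Ker\rho\), each to the power \(d_\rho\).

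There is essentially no obstacle here. The only point needing care is the \(G\)-stability of \(V_\rho^H\), which uses normality. Equivalently, one can note that \(\Ind_H^G(\mathbf{1}_H)\) is the inflation of the regular representation of \(G/H\), so it contains each irreducible representation of \(G/H\) with multiplicity equal to its dimension.
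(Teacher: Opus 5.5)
Your proposal is correct and follows essentially the same route as the paper: compute \(a_\rho^H=\dim V_\rho^H\) by Frobenius reciprocity, use normality of \(H\) to see that \(V_\rho^H\) is \(G\)-stable (hence \(V_\rho\) or \(0\)), and substitute into \cref{lem:main}. The only cosmetic difference is that you write \(\Hom_H(\Res_H\rho,\mathbf{1}_H)\) rather than \(\Hom_H(\mathbf{1}_H,\Res_H\rho)\). These have equal dimension by complete reducibility over \(\bbC\), so nothing changes.
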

\begin{proof}
For each \(\rho\in\Irr(G)\setminus\{\mathbf{1}_G\}\), let
\(a_\rho^H\) denote the multiplicity of \(\rho\) in
\(\Ind_H^G(\mathbf{1}_H)\), as in \cref{lem:main}.
By Frobenius reciprocity, the multiplicity \(a_\rho^H\) is
\[
a_\rho^H
=
\dim \Hom_G(\Ind_{H}^{G}\mathbf{1}_H,\rho)
=
\dim \Hom_H(\mathbf{1}_H,\Res_H^G\rho)
=
\dim V_\rho^H.
\]
Since \(H\) is normal in \(G\), the subspace \(V_\rho^H\) is
\(G\)-stable. Hence, by the irreducibility of \(\rho\),
\[
V_\rho^H=
\begin{cases}
V_\rho,& H\subseteq\Ker\rho,\\
0,& H\not\subseteq\Ker\rho.
\end{cases}
\]
Thus
\[
a_\rho^H=
\begin{cases}
d_\rho,& H\subseteq\Ker\rho,\\
0,& H\not\subseteq\Ker\rho.
\end{cases}
\]
The assertion follows from \cref{lem:main}.
\end{proof}
\section{Brauer--Kuroda relations for graphs}
\subsection{The M\"{o}bius functions of finite posets}\label{mobi fu}
		
We recall some basic facts about Möbius functions of finite partially ordered sets.
For more details, we refer to \cite{Sta12} and \cite{Zas87}.


\begin{dfn}\label{dfn:mob}
Let \(\scL\) be a finite partially ordered set. The \tit{M\"{o}bius function} of \(\scL\) is the function 
\(\mu\colon \scL \times \scL \to \bbZ\) satisfying the following condition 
\[
\mu(x,y)=\begin{cases}
	\hfill 1 \hfill & \text{if \(x=y\),}\\
	\displaystyle{-\sum_{\substack{z\in \scL \\ x\leq z< y}}\mu(x,z)} & \text{if \(x<y\),}\\
	\hfill 0 \hfill & \text{otherwise}.
\end{cases} 
\]
\end{dfn}
These conditions can be equivalently written as (see Proposition 7.1.2. in \cite{Zas87}): 
\[
\mu(x,y)=\begin{cases}
	\hfill 1 \hfill & \text{if \(x=y\),}\\
	\displaystyle{-\sum_{\substack{z\in \scL \\ x< z\leq y}}\mu(z,y)} & \text{if \(x<y\),}\\
	\hfill 0 \hfill  & \text{otherwise}.
\end{cases}
\]
\begin{prp}[M\"{o}bius Inversion Formula, {\cite[Proposition 3.7.2]{Sta12}}]\label{Moinv}
Let \(\scL\) be a finite partially ordered set. Let \(f,g\colon\scL\to A\) be two maps, where \(A\) is an abelian group. Then we have
\[
\text{\(g(x)=\sum_{\substack{y\in \scL\\y\geq x}}f(y)\) if and only if \(f(x)=\sum_{\substack{y\in \scL\\y\geq x}}\mu(x,y) g(y)\)}
\]
\end{prp}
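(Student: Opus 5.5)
The plan is to prove both implications by direct substitution, exchanging the order of a finite double sum and then collapsing the inner sum with the defining recursion of \(\mu\) in \cref{dfn:mob}. In its first form, that recursion says that for \(x\leq z\) in \(\scL\)
\[
\sum_{\substack{y\in\scL\\ x\leq y\leq z}}\mu(x,y)=\delta_{x,z},
\]
and in its second (equivalent) form it says that
\[
\sum_{\substack{y\in\scL\\ x\leq y\leq z}}\mu(y,z)=\delta_{x,z}.
\]
Since \(\scL\) is finite and \(A\) is an abelian group, all sums are finite and may be reordered freely. Only integer multiples of elements of \(A\) occur, so no ring structure on \(A\) is needed.

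For the forward implication, I assume \(g(x)=\sum_{y\geq x}f(y)\) for all \(x\) and substitute this into \(\sum_{y\geq x}\mu(x,y)g(y)\). This gives \(\sum_{y\geq x}\sum_{z\geq y}\mu(x,y)f(z)\). I then reindex over pairs \(x\leq y\leq z\), summing over \(z\geq x\) on the outside:
\[
\sum_{\substack{z\in\scL\\ z\geq x}}\Bigl(\sum_{\substack{y\in\scL\\ x\leq y\leq z}}\mu(x,y)\Bigr)f(z).
\]
By the first form of the recursion, the inner coefficient is \(\delta_{x,z}\), so the whole expression equals \(f(x)\).

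For the converse, I assume \(f(x)=\sum_{y\geq x}\mu(x,y)g(y)\) for all \(x\) and compute \(\sum_{y\geq x}f(y)=\sum_{y\geq x}\sum_{z\geq y}\mu(y,z)g(z)\). Reordering in the same way gives
\[
\sum_{\substack{z\in\scL\\ z\geq x}}\Bigl(\sum_{\substack{y\in\scL\\ x\leq y\leq z}}\mu(y,z)\Bigr)g(z).
\]
Here the inner sum runs over the first variable of \(\mu\), so I need the second form of the recursion. It makes the coefficient \(\delta_{x,z}\), and the sum collapses to \(g(x)\).

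The only point needing care is the equivalence of the two forms of the recursion, which the paper quotes from \cite{Zas87}. If I wanted to verify it rather than cite it, I would pass to the incidence algebra of \(\scL\), consisting of functions on pairs \(x\leq y\) under convolution. The first form says \(\mu*\zeta=\delta\), where \(\zeta\) is the constant function \(1\), and the second says \(\zeta*\mu=\delta\). Choosing a linear extension of the order makes the incidence algebra a subalgebra of upper triangular matrices, so it is finite-dimensional. In a finite-dimensional algebra a one-sided inverse is automatically two-sided, so either identity implies the other, and the proof is complete.
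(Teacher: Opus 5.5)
Your proof is correct and is essentially the argument behind the paper's citation. The paper proves nothing itself and refers to Stanley, where the formula comes from letting the incidence algebra act on functions, so that it reduces to $g=\zeta f \Leftrightarrow f=\mu g$. Your exchange of summations is exactly this in coordinates: the forward direction uses $\mu*\zeta=\delta$ (the paper's defining recursion), and the converse uses $\zeta*\mu=\delta$ (the equivalent form quoted from Zaslavsky).

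Your remark that only integer multiples of elements of $A$ occur is what extends Stanley's field-valued statement to an arbitrary abelian group. The paper needs this because it applies the formula multiplicatively in $\bbC^{*}$, and there it uses only the forward direction.
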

\subsection{A Kuroda-type formula}
In this subsection, we prove \cref{main3} and examine several examples.
\begin{dfn}\label{dfn:h}
For a finite group \(G\), we define 
\[
\calH_{G}\coloneq \{\, H\mid H=\Ker(\rho) \text{ for some }\rho\in \Irr(G)\,\}.
\] 
We also define
\[
\underline{\calH}_{G}\coloneq \calH_{G}\sqcup\{\emptyset\},
\]
where \(\emptyset\) denotes the empty set. 
The set \(\underline{\calH}_{G}\) is partially ordered by inclusion.
\end{dfn}
\begin{thm}\label{main3}
Let \(G\) be a finite group, and let \(\underline{\calH}_{G}\) be the partially ordered set defined in \cref{dfn:h}. Let
\(
\mu\colon \underline{\calH}_{G}\times \underline{\calH}_{G}\to \bbZ
\)
be the M\"obius function of \(\underline{\calH}_{G}\) (cf.~\cref{dfn:mob}). 
Then, for any \(G\)-cover \(Y/X\), we have
\[
\kappa(Y)
=
\frac{1}{|G|}
\left(\prod_{v\in V_X}m_v(Y/X)\right)
\prod_{H\in \calH_G}
\left(
\frac{[G:H]\kappa(X_H)}{f_{H}(Y/X)}
\right)^{-\mu(\emptyset,H)}.
\]
\end{thm}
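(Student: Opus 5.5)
The plan is a Möbius inversion on \(\underline{\calH}_G\), applied multiplicatively (that is, in the abelian group \(\bbC^\times\) written multiplicatively) to the quantities coming from \cref{cor:normal-intermediate-complexity}.

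\textbf{Step 1: the two functions.} For \(H\in\calH_G\) (always a normal subgroup) put
\[
g(H)\coloneq\frac{[G:H]\kappa(X_H)}{f_H(Y/X)\kappa(X)}=\prod_{\substack{\rho\in\Irr(G)\setminus\{\mathbf{1}_G\}\\ H\subseteq\Ker\rho}}h_{Y/X}(\rho,1)^{d_\rho},
\]
where the equality is \cref{cor:normal-intermediate-complexity}. Each \(h_{Y/X}(\rho,1)\) with \(\rho\ne\mathbf{1}_G\) is nonzero: by \cref{main2} the product of these values equals \(|G|\kappa(Y)/(\kappa(X)\prod_v m_v)\neq0\), and the exponents \(d_\rho\) are positive. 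For \(K\in\calH_G\), set
\[
F(K)\coloneq\prod_{\rho\ne\mathbf{1}_G,\ \Ker\rho=K}h_{Y/X}(\rho,1)^{d_\rho}.
\]
Then \(F(G)=1\), since \(\Ker\rho=G\) forces \(\rho=\mathbf{1}_G\) when \(\rho\) is irreducible. Moreover \(g(H)=\prod_{K\in\calH_G,\,K\supseteq H}F(K)\).

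\textbf{Step 2: extend to \(\emptyset\) and invert.} Set \(F(\emptyset)\coloneq1\) and \(g(\emptyset)\coloneq\prod_{K\in\calH_G}F(K)\). With these conventions the relation \(g(x)=\prod_{y\ge x}F(y)\) holds on all of \(\underline{\calH}_G\), because \(\emptyset\le K\) for every \(K\). Applying \cref{Moinv} at \(x=\emptyset\) gives
\[
1=F(\emptyset)=g(\emptyset)\prod_{H\in\calH_G}g(H)^{\mu(\emptyset,H)},
\]
so that
\[
g(\emptyset)=\prod_{H\in\calH_G}g(H)^{-\mu(\emptyset,H)}.
\]
By \cref{main2}, \(g(\emptyset)=\prod_{\rho\neq\mathbf{1}_G}h_{Y/X}(\rho,1)^{d_\rho}=|G|\kappa(Y)/(\kappa(X)\prod_v m_v(Y/X))\).

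\textbf{Step 3: clear the factor \(\kappa(X)\).} Substituting gives
\[
\kappa(Y)=\frac{1}{|G|}\prod_v m_v\cdot\kappa(X)^{1+\sum_{H\in\calH_G}\mu(\emptyset,H)}\prod_{H\in\calH_G}\left(\frac{[G:H]\kappa(X_H)}{f_H}\right)^{-\mu(\emptyset,H)}.
\]
It remains to show that \(\sum_{H\in\underline{\calH}_G}\mu(\emptyset,H)=0\). This holds because \(\underline{\calH}_G\) has the maximum \(G=\Ker\mathbf{1}_G\) and \(\emptyset\neq G\), and \cref{dfn:mob} gives \(\sum_{\emptyset\le z\le G}\mu(\emptyset,z)=0\).

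\textbf{Expected obstacle.} The bookkeeping is the delicate part: checking that \(F(G)=1\), and that the adjoined bottom element \(\emptyset\) makes the inversion produce exactly the product over all nontrivial \(\rho\). The nonvanishing of \(h_{Y/X}(\rho,1)\) is only needed to make the quotients in Step 1 legitimate, and it follows from \cref{main2} as noted there.
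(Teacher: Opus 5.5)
Your proof is correct and follows the paper's argument: Möbius inversion on \(\underline{\calH}_G\), carried out multiplicatively in \(\bbC^\times\), applied to the products of \(h_{Y/X}(\rho,1)^{d_\rho}\) grouped by \(\Ker\rho\), and combined with \cref{main2} and \cref{cor:normal-intermediate-complexity}. The only difference is bookkeeping: the paper's auxiliary function takes the value \(\kappa(X)\) at \(G\), so its \(g(H)\) equals \([G:H]\kappa(X_H)/f_H(Y/X)\) exactly and no final cancellation is needed, whereas you divide out \(\kappa(X)\) and then remove it using \(\sum_{z\in\underline{\calH}_G}\mu(\emptyset,z)=0\). Your explicit check that each \(h_{Y/X}(\rho,1)\) is nonzero is a useful detail that the paper leaves tacit.
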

\begin{proof}
Define two functions \(f,g\colon \underline{\calH}_{G}\to \bbC^{*}\) by 
\[
f(H)\coloneq
\begin{cases}
\hfill \kappa(X) \hfill & \text{if \(H=G,\)}\\
\displaystyle{\prod_{\substack{\rho\in \Irr(G)\setminus\{\mathbf{1}_{G}\} \\ H=\Ker\rho}}h_{Y/X}(\rho,1)^{d_{\rho}}} & \text{otherwise},
\end{cases}
\]
\[
g(H)
\coloneq 
\kappa(X)
\hspace{-4mm}
\prod_{\substack{\rho\in \Irr(G)\setminus\{\mathbf{1}_{G}\} \\ H\subseteq \Ker\rho}}
h_{Y/X}(\rho,1)^{d_{\rho}}.
\]
By the definition of \(f\) and \(g\), for every \(H\in\underline{\calH}_G\), we have
\[
g(H)
=
\prod_{\substack{K\in \underline{\calH}_{G} \\ H\subseteq K}}
f(K).
\]
By the Möbius inversion formula (see \cref{Moinv}), we have
\[
f(H)
=
\prod_{\substack{K\in \underline{\calH}_{G} \\ H\subseteq K}}
g(K)^{\mu(H,K)}.
\]
Since \(f(\emptyset)=1\), we have 
\begin{equation}\label{eq:mob}
\prod_{K\in \underline{\calH}_{G}}
g(K)^{\mu(\emptyset,K)}
=
1.
\end{equation}
Hence we obtain
\begin{align}\label{eq:5.2}
\frac{|G|\kappa(Y)}{\prod_{v\in V_X}m_{v}(Y/X)}
&\overset{\mathclap{\ref{main2}}}{=}\notag
\kappa(X)
\prod_{\rho\in \Irr(G)\setminus\{\mathbf{1}_{G}\}}
h_{Y/X}(\rho,1)^{d_{\rho}}\\
&=\notag
g(\emptyset)\\
&\overset{\mathclap{\eqref{eq:mob}}}{=}
\prod_{H\in \calH_{G}}
g(H)^{-\mu(\emptyset,H)}.
\end{align}
Moreover, by \cref{cor:normal-intermediate-complexity}, we have 
\begin{equation}\label{eq:5.3}
g(H)=\frac{[G:H]\kappa(X_H)}{f_{H}(Y/X)}.
\end{equation}
By \eqref{eq:5.2} and \eqref{eq:5.3}, we conclude that
\[
\kappa(Y)
=
\frac{1}{|G|}
\left(\prod_{v\in V_X}m_v(Y/X)\right)
\prod_{H\in \calH_G}
\left(
\frac{[G:H]\kappa(X_H)}{f_{H}(Y/X)}
\right)^{-\mu(\emptyset,H)},
\]
as desired.
\end{proof}

\begin{eg}\label{eg:elementary-abelian-branched}
Let \(\pi\colon Y\to X\) be a \(G\)-cover with
\(
G=(\bbZ/2\bbZ)^m
\)
with \(m\geq 2\).
The group \(G\) has \(2^m-1\) subgroups
\(
H_1,\ldots,H_{2^m-1}
\)
of index \(2\), and we have
\[
\underline{\calH}_G
=
\{\emptyset,H_1,\ldots,H_{2^m-1},G\}.
\]
The M\"obius function \(\mu\) of \(\underline{\calH}_G\) satisfies
\(
\mu(\emptyset,G)=2^m-2,
\)
and 
\(
\mu(\emptyset,H_i)=-1
\)
for all
\(
i\in \{1,\dots,2^{m}-1\}.
\)
Write
\(m_v(Y/X)=2^{r_v}\)
with \(0\leq r_v\leq m\).
Let \(X_i\) denote the intermediate graph corresponding to \(H_i\).

Since \(f_G(Y/X)=1\), applying \cref{main3} gives
\[
\kappa(Y)
=\frac{2^{2^m-m-1}}{\kappa(X)^{2^m-2}}
\frac{2^{\sum_{v\in V_X}r_v}}
     {\prod_{i=1}^{2^m-1}f_{H_i}(Y/X)}
\prod_{i=1}^{2^m-1}\kappa(X_i).
\]
For each \(v\in V_X\), choose a vertex \(w_v\) above \(v\) and put
\(I_v\coloneq\Stab_G(w_v)\), so that \(|I_v|=2^{r_v}\).
Since \(G\) is abelian, \(I_v\) is independent of the choice of
\(w_v\). Let \(\pi_i\colon X_i\to X\) be the natural map.

If \(I_v\subseteq H_i\), then \(\pi_i^{-1}(v)\) consists of two
vertices, each with ramification index \(2^{r_v}\) in \(Y/X_i\).
If \(I_{v}\nsubseteq H_{i}\), \(I_vH_i=G\) and \(|I_v\cap H_i|=2^{r_v-1}\), so
\(\pi_i^{-1}(v)\) consists of one vertex, whose ramification index
in \(Y/X_i\) is \(2^{r_v-1}\). Thus
\[
\prod_{x\in\pi_i^{-1}(v)}m_x(Y/X_i)
=\begin{cases}
2^{2r_v},& I_v\subseteq H_i,\\
2^{r_v-1},& I_v\not\subseteq H_i.
\end{cases}
\]
By the definition of \(f_{H_i}(Y/X)\), it follows that
\[
f_{H_i}(Y/X)
=\prod_{\substack{v\in V_X\\I_v\subseteq H_i}}2^{-r_v}
 \prod_{\substack{v\in V_X\\I_v\not\subseteq H_i}}2.
\]

The subgroups of index \(2\) in \(G\) containing \(I_v\) correspond
to the subgroups of index \(2\) in
\(G/I_v\cong(\bbZ/2\bbZ)^{m-r_v}\).
Hence exactly \(2^{m-r_v}-1\) of the subgroups \(H_i\) contain
\(I_v\), while exactly \(2^m-2^{m-r_v}\) of them do not contain \(I_{v}\).
Consequently,
\begin{align*}
\prod_{i=1}^{2^m-1}f_{H_i}(Y/X)
&=2^{\sum_{v\in V_X}
\bigl(-r_v(2^{m-r_v}-1)+2^m-2^{m-r_v}\bigr)}\\
&=2^{\sum_{v\in V_X}
\bigl(r_v+2^m-(r_v+1)2^{m-r_v}\bigr)}.
\end{align*}
Substituting this expression into the preceding formula for
\(\kappa(Y)\), we obtain
\begin{equation*}\tag{\(\varheartsuit\)}
\kappa(Y)
=\frac{2^{2^m-m-1+
\sum_{v\in V_X}\bigl((r_v+1)2^{m-r_v}-2^m\bigr)}}
{\kappa(X)^{2^m-2}}
\prod_{i=1}^{2^m-1}\kappa(X_i).
\end{equation*}

In particular, if \(Y/X\) is unramified, then \(r_v=0\) for every
\(v\in V_X\), and hence
\(
(r_v+1)2^{m-r_v}-2^m=0
\)
for every \(v\in V_X\).
Thus \((\varheartsuit)\) reduces to
\begin{equation}\label{eq:HMSV}
\kappa(Y)
=
\frac{2^{2^m-m-1}}
{\kappa(X)^{2^m-2}}
\prod_{i=1}^{2^m-1}\kappa(X_i).
\end{equation}
\eqref{eq:HMSV} recovers the unramified formula of Hammer, Mattman, Sands, and Valli\`eres \cite[Theorem~3.6 and Remark~3.7]{HMSV24},
and see also \cite[Example~4.5]{Miz26}.
Therefore, \((\varheartsuit)\) may be regarded as a ramified generalization of \eqref{eq:HMSV}.
\end{eg}
\begin{eg}[{cf.~\cite[Example~4.6]{Miz26}}]
\label{eg:Z2-Z6-explicit}
Let
\(
G=\bbZ/2\bbZ\times\bbZ/6\bbZ.
\)
Let \(X=K_3\) be the complete graph on three
vertices, with
\(
V_X=\{v_0,v_1,v_2\}.
\)
Choose an orientation
\(
S=\{e_0,e_1,e_2\}
\)
of \(X\) such that
\((o(e_0),t(e_0))=(v_0,v_1)\),
\((o(e_1),t(e_1))=(v_1,v_2)\),
and
\((o(e_2),t(e_2))=(v_2,v_0)\).
Define a voltage assignment by
\[
\alpha(e_0)=\alpha(e_1)=\alpha(e_2)=(0,0)
\]
and put
\(I_{v_0}=\langle(1,0)\rangle\),
\(I_{v_1}=\langle(0,3)\rangle\),
and
\(I_{v_2}=\langle(0,2)\rangle\).
Since
\(
\langle I_{v_0},I_{v_1},I_{v_2}\rangle
=
G,
\)
the derived graph \(Y=X(\alpha,\calI)\) is connected by
\cref{prp:connectedness}. Thus \(Y/X\) is a \(G\)-cover.
\begin{figure}[htbp]
\centering
\begin{tikzcd}[
  column sep=3em,
  row sep=2em,
  font=\small,
  scale cd=0.6,
  nodes in empty cells
]
{}
&
{}
&
\begin{tikzpicture}
\node[
  draw=none,
  minimum size=5.5cm,
  regular polygon,
  regular polygon sides=16,
  rotate=11.25
] (a) {};

\foreach \x in {1,2,...,16}{
  \pgfmathtruncatemacro{\xx}{\x}
  \fill (a.corner \xx) circle[radius=1.5pt];
}

\foreach \y/\z in {
  1/7,2/8,3/9,4/7,5/8,6/9,
  1/10,2/11,3/12,4/10,5/11,6/12,
  7/13,8/14,9/13,7/14,8/13,9/14,
  10/15,11/16,12/15,10/16,11/15,12/16,
  1/13,2/14,3/13,4/14,5/13,6/14,
  1/15,2/16,3/15,4/16,5/15,6/16
}{
  \pgfmathtruncatemacro{\yy}{\y}
  \pgfmathtruncatemacro{\zz}{\z}
  \path (a.corner \yy) edge (a.corner \zz);
}
\end{tikzpicture}
\quad \text{\LARGE\(Y\)} \arrow[d]  \arrow[ld] \arrow[rd]  
& {}  \\  
& 
\begin{tikzpicture}
\node[
  draw=none,
  minimum size=4cm,
  regular polygon,
  regular polygon sides=11,
  rotate=0
] (a) {};

\foreach \x in {1,2,...,11}{
  \pgfmathtruncatemacro{\xx}{\x}
  \fill (a.corner \xx) circle[radius=1.5pt];
}

\foreach \y/\z in {
  1/7,7/10,1/10,
  2/8,8/11,2/11,
  3/9,9/10,3/10,
  4/7,7/11,4/11,
  5/8,8/10,5/10,
  6/9,9/11,6/11
}{
  \pgfmathtruncatemacro{\yy}{\y}
  \pgfmathtruncatemacro{\zz}{\z}
  \path (a.corner \yy) edge (a.corner \zz);
}
\end{tikzpicture}
\quad \text{\LARGE\(X_{H_{1}}\)}\arrow[ld]\arrow[rd]   
& 
\begin{tikzpicture}
\node[
  draw=none,
  minimum size=3.6cm,
  regular polygon,
  regular polygon sides=8,
  rotate=0
] (b) {};

\foreach \x in {1,2,...,8}{
  \pgfmathtruncatemacro{\xx}{\x}
  \fill (b.corner \xx) circle[radius=1.5pt];
}

\foreach \y/\z in {
  4/7,1/7,
  5/8,2/8,
  6/7,3/7,
  4/8,1/8,
  5/7,2/7,
  6/8,3/8
}{
  \pgfmathtruncatemacro{\yy}{\y}
  \pgfmathtruncatemacro{\zz}{\z}
  \path (b.corner \yy) edge (b.corner \zz);
}

\foreach \y/\z in {1/4,2/5,3/6}{
  \pgfmathtruncatemacro{\yy}{\y}
  \pgfmathtruncatemacro{\zz}{\z}
  \path (b.corner \yy) edge[bend left=10] (b.corner \zz);
  \path (b.corner \yy) edge[bend right=10] (b.corner \zz);
}
\end{tikzpicture}
\quad \text{\LARGE\(X_{H_{2}}\)}\arrow[ld]\arrow[d]  
& 
\begin{tikzpicture}
\node[
  draw=none,
  minimum size=4cm,
  regular polygon,
  regular polygon sides=11,
  rotate=0
] (a) {};

\foreach \x in {1,2,...,11}{
  \pgfmathtruncatemacro{\xx}{\x}
  \fill (a.corner \xx) circle[radius=1.5pt];
}

\foreach \y/\z in {
  1/4,4/10,1/10,
  2/5,5/10,2/10,
  3/6,6/10,3/10,
  1/7,7/11,1/11,
  2/8,8/11,2/11,
  3/9,9/11,3/11
}{
  \pgfmathtruncatemacro{\yy}{\y}
  \pgfmathtruncatemacro{\zz}{\z}
  \path (a.corner \yy) edge (a.corner \zz);
}
\end{tikzpicture}
\quad \text{\LARGE\(X_{H_{3}}\)}\arrow[ld]\arrow[d] \\  
\begin{tikzpicture}
\node[
  draw=none,
  minimum size=3.2cm,
  regular polygon,
  regular polygon sides=5,
  rotate=90
] (a) {};

\foreach \x in {1,2,...,5}{
  \pgfmathtruncatemacro{\xx}{\x}
  \fill (a.corner \xx) circle[radius=1.5pt];
}

\foreach \y/\z in {
  1/3,3/4,1/4,
  2/3,3/5,2/5
}{
  \pgfmathtruncatemacro{\yy}{\y}
  \pgfmathtruncatemacro{\zz}{\z}
  \path (a.corner \yy) edge (a.corner \zz);
}
\end{tikzpicture}
\quad \text{\LARGE\(X_{H_{5}}\)}\arrow[rrd]  
&
\begin{tikzpicture}
\node[
  draw=none,
  minimum size=3cm,
  regular polygon,
  regular polygon sides=4,
  rotate=45
] (b) {};

\foreach \x in {1,2,...,4}{
  \pgfmathtruncatemacro{\xx}{\x}
  \fill (b.corner \xx) circle[radius=1.5pt];
}

\foreach \y/\z in {
  2/3,1/3,
  2/4,1/4
}{
  \pgfmathtruncatemacro{\yy}{\y}
  \pgfmathtruncatemacro{\zz}{\z}
  \path (b.corner \yy) edge (b.corner \zz);
}

\path (b.corner 1) edge[bend left=12] (b.corner 2);
\path (b.corner 1) edge[bend right=12] (b.corner 2);
\end{tikzpicture}
\quad \text{\LARGE\(X_{H_{6}}\)}\arrow[rd] 
&
\begin{tikzpicture}
\node[
  draw=none,
  minimum size=3.4cm,
  regular polygon,
  regular polygon sides=7,
  rotate=0
] (b) {};

\foreach \x in {1,2,...,7}{
  \pgfmathtruncatemacro{\xx}{\x}
  \fill (b.corner \xx) circle[radius=1.5pt];
}

\foreach \y/\z in {
  1/4,4/7,1/7,
  2/5,5/7,2/7,
  3/6,6/7,3/7
}{
  \pgfmathtruncatemacro{\yy}{\y}
  \pgfmathtruncatemacro{\zz}{\z}
  \path (b.corner \yy) edge (b.corner \zz);
}
\end{tikzpicture}
\quad \text{\LARGE\(X_{H_{4}}\)}\arrow[d]
&
\begin{tikzpicture}
\node[
  draw=none,
  minimum size=3.2cm,
  regular polygon,
  regular polygon sides=5,
  rotate=90
] (a) {};

\foreach \x in {1,2,...,5}{
  \pgfmathtruncatemacro{\xx}{\x}
  \fill (a.corner \xx) circle[radius=1.5pt];
}

\foreach \y/\z in {
  1/3,3/4,1/4,
  2/3,3/5,2/5
}{
  \pgfmathtruncatemacro{\yy}{\y}
  \pgfmathtruncatemacro{\zz}{\z}
  \path (a.corner \yy) edge (a.corner \zz);
}
\end{tikzpicture}
\quad \text{\LARGE\(X_{H_{7}}\)}\arrow[ld]\\ 
 {}  &  {}  & 
\begin{tikzpicture}
\node[
  draw=none,
  minimum size=3cm,
  regular polygon,
  regular polygon sides=3,
  rotate=0
] (b) {};

\foreach \x in {1,2,3}{
  \pgfmathtruncatemacro{\xx}{\x}
  \fill (b.corner \xx) circle[radius=1.5pt];
}

\foreach \y/\z in {
  1/2,2/3,3/1
}{
  \pgfmathtruncatemacro{\yy}{\y}
  \pgfmathtruncatemacro{\zz}{\z}
  \path (b.corner \yy) edge (b.corner \zz);
}
\end{tikzpicture}
\quad \text{\LARGE\(X\)}
&  {}     	
\end{tikzcd}
\caption{The quotient graphs associated with the subgroups of \(G\)}
\label{fig}
\end{figure}
We have
\[
\calH_G=\{H_1,\dots,H_8\},
\]
where
\(H_1=\langle(1,0)\rangle\),
\(H_2=\langle(1,3)\rangle\),
\(H_3=\langle(0,3)\rangle\),
\(H_4=\langle(1,0),(0,3)\rangle\),
\(H_5=\langle(1,2)\rangle\),
\(H_6=\langle(1,1)\rangle\),
\(H_7=\langle(0,1)\rangle\)
and
\(H_8=G\).
The corresponding intermediate graphs are shown in Figure \ref{fig}.
Moreover,
\[
\mu(\emptyset,H_i)=
\begin{cases}
-1 & \text{if }i\in\{1,2,3\},\\
2  & \text{if }i=4,\\
0  & \text{if }i\in\{5,6,7,8\}.
\end{cases}
\]
Using SageMath \cite{Sage}, we compute
\[
\kappa(X_{H_{1}})=3888,\quad \kappa(X_{H_{2}})=5184,\quad \kappa(X_{H_{3}})=3888,\quad \kappa(X_{H_{4}})=27.
\]
Moreover, we can compute 
\(f_{H_{1}}(Y/X)=3/16\),
\(f_{H_{2}}(Y/X)=12\),
\(f_{H_{3}}(Y/X)=3/16\),
and
\(f_{H_{4}}(Y/X)=3/16\).
By \cref{main3}, we have
\[
\kappa(Y)
=
\frac{1}{|G|}
\left(
\prod_{v\in V_{X}}m_{v}(Y/X)
\right)
\frac{
\displaystyle
\prod_{i=1}^{3}
\left(
\dfrac{[G:H_i]\kappa(X_{H_i})}
{f_{H_i}(Y/X)}
\right)
}{
\left(
\dfrac{[G:H_4]\kappa(X_{H_4})}
{f_{H_4}(Y/X)}
\right)^2
}\\
=
214990848.
\]
A computation of the number of spanning trees of \(Y\) using SageMath \cite{Sage} gives
\(
\kappa(Y)=214990848,
\)
which agrees with the above calculation.
\end{eg}

\subsection{Brauer--Kuroda relations}
In this subsection, we prove \cref{main4} and consider several examples. 
\begin{dfn}\label{dfn:c}
For a finite group \(G\), we define 
\[
\calC_{G}\coloneq \{\, C\mid \text{\(C\) is a cyclic subgroup of \(G\)}\, \}.
\]
\end{dfn}

\begin{thm}[Artin's induction theorem, cf.~{\cite[Theorem 2.1.3]{Sna94}}]\label{artinn}
Let $\chi$ be a rational-valued character of $G$. For every cyclic subgroup $C$ of $G$, let 
\[
a_{\chi}(C)\coloneq \frac{1}{[G:C]}\sum_{\substack{B\in \calC_{G} \\ C\subseteq B}}\mu_{\mathrm{cl}}([B:C])\chi(g_B),
\] 
where $g_{B}$ is a generator of $B$ and $\mu_{\mathrm{cl}}\colon \bbN\setminus\{0\}\to \{-1,0,1\}$ is the classical M\"{o}bius function. Then we have $$\chi=\sum_{\substack{C\in \calC_{G}}}a_{\chi}(C)\chi_{\Ind^{G}_{C}(\mathbf{1}_{C})}. $$ 
\end{thm}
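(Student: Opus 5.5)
The plan is to verify the identity pointwise as class functions. I would evaluate both sides at an arbitrary \(g\in G\), substituting the explicit formula for induced characters:
\[
\chi_{\Ind_C^G(\mathbf{1}_C)}(g)=\frac{1}{|C|}\bigl|\{x\in G\mid xgx^{-1}\in C\}\bigr|.
\]
Then the coefficient \(a_\chi(C)\) contributes the factor \(\frac{1}{[G:C]}\), and together with \(\frac{1}{|C|}\) this gives the uniform factor \(\frac{1}{|G|}\).

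\emph{Step 0: well-definedness.} First I would check that \(\chi(g_B)\) does not depend on the choice of generator \(g_B\). If \(\langle h\rangle=\langle h'\rangle\), then \(h'=h^k\) with \(\gcd(k,\operatorname{ord}h)=1\). The value \(\chi(h^k)\) is the image of \(\chi(h)\) under the Galois automorphism \(\zeta\mapsto\zeta^k\) of \(\mathbb{Q}(\zeta_{\operatorname{ord}h})\). Since \(\chi\) is rational-valued, this automorphism fixes \(\chi(h)\), so \(\chi(h^k)=\chi(h)\). Consequently \(\chi(g_B)\) depends only on \(B\). Moreover, \(\chi(g_{\langle h\rangle})=\chi(h)\) for every \(h\in G\).

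\emph{Step 1: expand and swap sums.} After substitution, the right-hand side at \(g\) equals
\[
\frac{1}{|G|}\sum_{x\in G}\ \sum_{B\in\calC_G}\chi(g_B)\sum_{\substack{C\in\calC_G\\ \langle xgx^{-1}\rangle\subseteq C\subseteq B}}\mu_{\mathrm{cl}}([B:C]).
\]
Here I have used that \(xgx^{-1}\in C\) is equivalent to \(\langle xgx^{-1}\rangle\subseteq C\). Fix \(x\) and put \(D\coloneq\langle xgx^{-1}\rangle\). If \(D\not\subseteq B\), the inner sum is empty and vanishes. If \(D\subseteq B\), then \(B\) is cyclic, so its subgroups containing \(D\) correspond bijectively to the divisors \(d=[B:C]\) of \([B:D]\). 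The inner sum therefore equals \(\sum_{d\mid [B:D]}\mu_{\mathrm{cl}}(d)\), which is \(1\) if \(B=D\) and \(0\) otherwise.

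\emph{Step 2: collapse.} Only the term \(B=\langle xgx^{-1}\rangle\) survives for each \(x\). The expression becomes
\[
\frac{1}{|G|}\sum_{x\in G}\chi\bigl(g_{\langle xgx^{-1}\rangle}\bigr)=\frac{1}{|G|}\sum_{x\in G}\chi(xgx^{-1})=\chi(g),
\]
where the first equality uses Step 0 and the second uses that \(\chi\) is a class function.

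I expect the main subtlety to be Step 0, namely the independence of \(\chi(g_B)\) from the chosen generator. This is exactly where rationality of \(\chi\) enters, and it is what allows \(\chi(g_B)\) to be identified with \(\chi(xgx^{-1})\) in Step 2. The remaining manipulations are routine Möbius bookkeeping on the divisor lattice of the cyclic group \(B\).
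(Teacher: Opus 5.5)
Your proof is correct. The paper gives no proof of this statement: it quotes it from Snaith's book and applies it only to the trivial character. So there is no argument in the paper to match against, and yours is a valid self-contained verification. Each step checks out:
\begin{itemize}
\item the induced-character formula produces the uniform factor $1/|G|$;
\item every subgroup of the cyclic group $B$ lies in $\calC_G$, and the subgroups containing $D=\langle xgx^{-1}\rangle$ correspond bijectively to the divisors of $[B:D]$;
\item Step~0 is used exactly where it is needed, both for $a_\chi(C)$ to be well defined and for the equality $\chi(g_D)=\chi(xgx^{-1})$ in the collapse.
\end{itemize}

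Compared with the usual textbook route (for instance the proof of Artin's theorem via the functions $\theta_A$ in Serre's \emph{Linear Representations of Finite Groups}), your argument fuses two stages into one exchange of summations.

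That route first proves $|G|\cdot\mathbf{1}_G=\sum_{A\in\calC_G}\Ind_A^G(\theta_A)$, where $\theta_A(x)=|A|$ if $x$ generates $A$ and $0$ otherwise. This is your observation that each $xgx^{-1}$ generates exactly one cyclic subgroup. It then inverts inside each cyclic group, $\theta_A=\sum_{B\subseteq A}\mu_{\mathrm{cl}}([A:B])\,|B|\,\Ind_B^A(\mathbf{1}_B)$, which is your divisor-sum identity. It finishes with $\theta_A\cdot\Res^G_A\chi=\chi(g_A)\,\theta_A$ and the projection formula $\Ind_A^G(\theta_A\cdot\Res^G_A\chi)=\chi\cdot\Ind_A^G(\theta_A)$.

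The two-stage version isolates a universal identity in the character ring. That identity also yields the general form of Artin's theorem, where arbitrary characters are rational combinations of characters induced from cyclic subgroups. Your pointwise version is shorter and shows that the only properties of $\chi$ used are that it is a class function and that its value at $h$ depends only on $\langle h\rangle$. So the identity holds verbatim for every such $\mathbb{Q}$-valued class function. For genuine characters this condition is equivalent to rationality; your Step~0 is one direction.
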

\begin{cor}\label{cor:c}
Let \(\chi_0\) be the trivial character of \(G\). For each
\(C\in\calC_G\), define
\[
a(C)
\coloneq
\frac{1}{[G:C]}
\sum_{\substack{B\in\calC_G\\ C\subseteq B}}
\mu_{\mathrm{cl}}([B:C]).
\]
Then we have
\[
\chi_0
=
\sum_{C\in\calC_G}
a(C)\,
\chi_{\Ind_C^G(\mathbf{1}_{C})}.
\]
\end{cor}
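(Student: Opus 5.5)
This corollary is the special case \(\chi=\chi_0\) of \cref{artinn}. The plan is to substitute the trivial character into Artin's induction theorem and simplify the coefficients.

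First, \(\chi_0\) takes the value \(1\) on every element of \(G\), so it is rational-valued and \cref{artinn} applies to it. For every cyclic subgroup \(B\in\calC_G\) with generator \(g_B\) we have \(\chi_0(g_B)=1\). Hence for each \(C\in\calC_G\),
\[
a_{\chi_0}(C)=\frac{1}{[G:C]}\sum_{\substack{B\in\calC_G\\ C\subseteq B}}\mu_{\mathrm{cl}}([B:C])\cdot 1=a(C).
\]
In particular the coefficient is independent of the choice of generator \(g_B\), which is consistent with \cref{artinn}.

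Substituting this into the identity of \cref{artinn} gives
\[
\chi_0=\sum_{C\in\calC_G}a(C)\,\chi_{\Ind_C^G(\mathbf{1}_C)},
\]
which is the assertion.

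There is no real obstacle. The only points to check are that \(\chi_0\) qualifies as a rational-valued character, which is immediate, and that the index \([B:C]\) is a positive integer whenever \(C\subseteq B\), so that \(\mu_{\mathrm{cl}}([B:C])\) is defined. Both are clear.
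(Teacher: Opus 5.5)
Your proposal is correct and is the same argument as the paper, which proves the corollary by applying \cref{artinn} to the trivial character \(\chi_0\). Your added checks (that \(\chi_0\) is rational-valued and that \(\chi_0(g_B)=1\), so \(a_{\chi_0}(C)=a(C)\)) are exactly the verifications the paper leaves implicit.
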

\begin{proof}
Apply \cref{artinn} to the trivial character \(\chi_0\).
\end{proof}

\begin{thm}\label{main4}
Let \(G\) be a finite group, and let \(\calC_{G}\) be the set defined in \cref{dfn:c}. 
Then, for any \(G\)-cover \(Y/X\), we have
\[
\kappa(X)
=
\prod_{C\in \calC_{G}}
\left(
\frac{[G:C]\kappa(X_{C})}
{f_{C}(Y/X)}
\right)^{a(C)}.
\]
\end{thm}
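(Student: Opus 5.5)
The plan is to combine \cref{lem:main}, applied to each cyclic subgroup \(C\in\calC_G\), with the character identity of \cref{cor:c}. For each \(C\), \cref{lem:main} gives
\[
\frac{[G:C]\kappa(X_C)}{f_C(Y/X)}
=
\kappa(X)\prod_{\rho\in\Irr(G)\setminus\{\mathbf{1}_G\}}h_{Y/X}(\rho,1)^{a_\rho^C},
\]
where \(a_\rho^C\) is the multiplicity of \(\rho\) in \(\Ind_C^G(\mathbf{1}_C)\). Raising this to the power \(a(C)\) and taking the product over \(C\in\calC_G\), the right-hand side becomes
\[
\kappa(X)^{\sum_C a(C)}\prod_{\rho\in\Irr(G)\setminus\{\mathbf{1}_G\}}h_{Y/X}(\rho,1)^{\sum_C a(C)a_\rho^C}.
\]
Since the \(a(C)\) are rational, I will understand the powers as positive real numbers. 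This is legitimate because each base \([G:C]\kappa(X_C)/f_C(Y/X)\) is a positive real.

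Next, I read off the exponents from \cref{cor:c}. Write \(\chi_{\Ind_C^G(\mathbf{1}_C)}=\chi_0+\sum_{\rho\neq\mathbf{1}_G}a_\rho^C\chi_\rho\) and use the linear independence of irreducible characters. Comparing coefficients of \(\chi_0\) gives \(\sum_C a(C)=1\). Comparing coefficients of each nontrivial \(\chi_\rho\) gives \(\sum_C a(C)a_\rho^C=0\). Hence every \(h\)-factor receives total exponent \(0\), and the product collapses to \(\kappa(X)^1\), which is the claim.

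The main obstacle is that the exponents \(a(C)\) are rational rather than integral, and that \(h_{Y/X}(\rho,1)\) can be a complex number. This makes the naive manipulation of fractional powers problematic, for example through branch choices. I would handle it as follows. Choose a positive integer \(N\) with \(Na(C)\in\bbZ\) for all \(C\). Perform the computation above with integer exponents \(Na(C)\); this only uses the multiplicativity of integer powers in \(\bbC^*\). One needs each \(h_{Y/X}(\rho,1)\neq0\), which follows from \cref{main2} since \(\kappa(Y)>0\). The result is the identity
\[
\prod_C\left(\frac{[G:C]\kappa(X_C)}{f_C(Y/X)}\right)^{Na(C)}=\kappa(X)^N
\]
between positive reals. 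Then I take the unique positive \(N\)-th root of both sides, which yields the stated formula.
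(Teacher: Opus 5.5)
Your proposal is correct and follows the paper's proof: you apply \cref{lem:main} to each cyclic subgroup $C$, raise to the power $a(C)$, and use \cref{cor:c} together with linear independence of irreducible characters to get $\sum_C a(C)=1$ and $\sum_C a(C)a_\rho^C=0$. Clearing denominators with an integer $N$ (e.g.\ $N=|G|$), using $h_{Y/X}(\rho,1)\neq 0$ from \cref{main2}, and then taking positive $N$-th roots is extra rigor that the paper leaves implicit when it manipulates rational powers directly.
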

\begin{proof}
Let \(a^{C}_{\rho}\) be the integers satisfying 
\[
\chi_{\Ind^{G}_{C}\mathbf{1}_{C}}=\chi_{\mathbf{1}_{G}}+\sum_{\rho\in \Irr(G)\setminus \{\mathbf{1}_{G}\}}a_{\rho}^{C}\chi_{\rho}
\] 
for each \(C\in \calC_{G}\).
By \cref{cor:c}, we have 
\begin{align*}
\chi_{\mathbf{1}_{G}}
&\overset{\mathclap{\ref{cor:c}}}{=}
\sum_{C\in \calC_{G}}
\left(
a(C)
\left(
\chi_{\mathbf{1}_{G}}
+
\sum_{\rho\in \Irr(G)\setminus \{\mathbf{1}_{G}\}}
a^{C}_{\rho}\chi_{\rho}
\right)
\right)
\\
&=
\left(
\sum_{C\in \calC_{G}} a(C)
\right)
\chi_{\mathbf{1}_{G}}
+
\sum_{\rho\in \Irr(G)\setminus \{\mathbf{1}_{G}\}}
\left(
\sum_{C\in \calC_{G}}a(C)a^{C}_{\rho}
\right)
\chi_{\rho}.
\end{align*}
Since the irreducible characters of \(G\) are linearly independent,
we obtain
\begin{equation}\label{eq:aC}
\sum_{C\in \calC_G} a(C)=1,
\qquad
\sum_{C\in \calC_G}a(C)a^C_\rho=0
\end{equation}
for any \(\rho\in \Irr(G)\setminus\{\mathbf{1}_{G}\}\).
Then we have
\begin{align*}
&\prod_{C\in \calC_{G}}([G:C]\kappa(X_{C}))^{a(C)}\\
&\overset{\mathclap{\ref{lem:main}}}{=}
\prod_{C\in \calC_{G}}
\left(
f_{C}(Y/X)
\cdot
\kappa(X)
\hspace{-4mm}
\prod_{\rho\in\Irr(G)\setminus\{\mathbf{1}_{G}\}}
h_{Y/X}(\rho,1)^{a_\rho^C}
\right)^{a(C)}
\\
&=
\kappa(X)^{\sum_{C\in\calC_G}a(C)}
\prod_{C\in \calC_{G}}
{f_{C}(Y/X)}^{a(C)}
\prod_{\rho\in\Irr(G)\setminus\{\mathbf{1}_{G}\}}
h_{Y/X}(\rho,1)^{
\sum_{C\in\calC_G}a(C)a^C_\rho
}
\\
&\overset{\mathclap{\eqref{eq:aC}}}{=}
\kappa(X)
\prod_{C\in \calC_{G}}
{f_{C}(Y/X)}^{a(C)},
\end{align*}
as desired.
\end{proof}
\begin{rmk}
We make a few remarks on the main results.
\begin{enumerate}
\item
Let \(\ol{\calC}_G\coloneq\calC_G\sqcup\{\infty\}\) be the
partially ordered set obtained by adjoining a greatest element
\(\infty\) to \(\calC_G\), and let
\(\mu\colon\ol{\calC}_G\times\ol{\calC}_G\to\bbZ\)
be its M\"obius function.
Then, for every \(C\in\calC_G\),
\[
a(C)=-\frac{1}{[G:C]}\mu(C,\infty).
\]
Thus \cref{main4} can equivalently be written as
\[
\kappa(X)
=\prod_{C\in\calC_G}
\left(
\frac{[G:C]\kappa(X_C)}{f_C(Y/X)}
\right)^{-\frac{1}{[G:C]}\mu(C,\infty)}.
\]
This is the ramified analogue of \cite[Theorem~4.11]{Miz26}.

\item
Suppose that \(Y/X\) is unramified. Then \(m_v(Y/X)=1\) for
every \(v\in V_X\), and hence \(f_H(Y/X)=1\) for every subgroup
\(H\subseteq G\).
Therefore, \cref{main3,main4} specialize to the Kuroda and
Brauer--Kuroda relations for unramified \(G\)-covers obtained
in \cite[Theorem~4.3 and 4.11]{Miz26}.

\item
A finite group \(G\) is called \textit{irreducibly represented}
if it admits a faithful irreducible complex representation.
Every cyclic group has this property.
If \(G\) is irreducibly represented, then \cref{main3} reduces
to an identity.
Indeed, we have \(\{1_G\}\in\calH_G\).
For the M\"obius function \(\mu\) of \(\underline{\calH}_G\),
it follows that
\(\mu(\emptyset,\{1_G\})=-1\)
and
\(\mu(\emptyset,H)=0\)
for every \(H\in\calH_G\setminus\{\{1_G\}\}\).
Thus \cref{main3} reduces to the identity \(\kappa(Y)=\kappa(Y)\).
A list of groups of order at most \(24\), indicating which
are irreducibly represented, is given in
\cite[Table~A.1]{Miz26}.

\item
Suppose that \(G\) is cyclic in \cref{main4}.
Then every subgroup of \(G\) is cyclic, and
\[
a(C)=\begin{cases}
1,& \text{if \(C=G\)},\\
0,& \text{if \(C\neq G\)}.
\end{cases}
\]
Thus \cref{main4} reduces to the identity
\(\kappa(X)=\kappa(X)\).
If \(G\) is noncyclic, then \(G\notin\calC_G\), so
\(\kappa(X)\) does not appear on the right-hand side of the
formula in \cref{main4}. Hence the formula gives a nontrivial
relation in this case.

\item
A relation of the form 
\[
\sum_{H\subseteq G}n_{H}\Ind_{H}^{G}\mathbf{1}_{H}=0
\]
is called a \tit{Brauer relation}, where \(n_{H}\in \bbZ\). 
Artin's induction theorem (=\cref{artinn}) is an example of a Brauer relation.
Brauer relations in finite groups are studied and classified in \cite{BD15}.
Given a Brauer relation for \(G\), an argument similar to that in the proof of
\cref{main4} gives
\[
\prod_{H\subseteq G}\left(\frac{[G:H]\kappa(X_{H})}{f_{H}(Y/X)} \right)^{n_{H}}=1
\]
A related observation in the unramified setting appears in
\cite[Remark~4.13]{Miz26}.

\item
In \cite[Theorem~4.22]{Miz26}, we proved that no nontrivial monomial-type spanning tree formula exists for unramified \(G\)-covers when \(G\) is a cyclic group.
Thus, for cyclic groups, the triviality described in (3) and (4) reflects a nonexistence result already present in the unramified setting.
\end{enumerate}
\end{rmk}
\begin{eg}\label{eg:elementary-abelian-artin}
Let \(G=(\bbZ/2\bbZ)^m\), with \(m\geq2\), and let \(Y/X\) be a \(G\)-cover.
We show that (\(\varheartsuit\)) in \cref{eg:elementary-abelian-branched} also follows from \cref{main4}.
Let \(C_1,\ldots,C_{2^m-1}\) be the subgroups of \(G\) of order two, and let \(H_1,\ldots,H_{2^m-1}\) be its subgroups of index two.

Applying \cref{main4} to \(Y/X\) and taking the power \(2^{m-1}\) gives
\begin{equation}\label{eq:el2-direct-original}
  \kappa(X)^{2^{m-1}}
  =
  \left(
    \frac{2^m\kappa(Y)}{f_{\{1_G\}}(Y/X)}
  \right)^{1-2^{m-1}}
  \prod_{i=1}^{2^m-1}
  \frac{2^{m-1}\kappa(X_{C_i})}{f_{C_i}(Y/X)}.
\end{equation}
Applying \cref{main4} to the \(H_j\)-cover \(Y/X_{H_j}\) and 
raising both sides to the power \(2^{m-2}\), we obtain
\[
\kappa(X_{H_j})^{2^{m-2}}
=
\left(
\frac{2^{m-1}\kappa(Y)}
{f_{\{1_G\}}(Y/X_{H_j})}
\right)^{1-2^{m-2}}
\prod_{\substack{C\subseteq H_j\\ |C|=2}}
\frac{2^{m-2}\kappa(X_C)}
{f_C(Y/X_{H_j})}.
\]
By the definition of the ramification factor, we have
\[
f_{\{1_G\}}(Y/X_{H_j})
=
\frac{f_{\{1_G\}}(Y/X)}{f_{H_j}(Y/X)}.
\]
Thus, we obtain
\begin{equation}\label{eq:el2-direct-hyperplane}
\left(
\frac{2\kappa(X_{H_j})}{f_{H_j}(Y/X)}
\right)^{2^{m-2}}
=
\left(
\frac{2^m\kappa(Y)}{f_{\{1_G\}}(Y/X)}
\right)^{1-2^{m-2}}
\prod_{\substack{C\subseteq H_j\\ |C|=2}}
\frac{2^{m-1}\kappa(X_C)}{f_C(Y/X)}.
\end{equation}

Each \(C_i\) is contained in \(2^{m-1}-1\) of the
subgroups \(H_j\), since the subgroups \(H_{j}\) containing \(C_{i}\) correspond to the
index-two subgroups of \(G/C_i\). 
Thus, multiplying \eqref{eq:el2-direct-hyperplane} over \(j=1,\ldots,2^m-1\),
we obtain
\[
\prod_{j=1}^{2^m-1}
\left(
\frac{2\kappa(X_{H_j})}{f_{H_j}(Y/X)}
\right)^{2^{m-2}}
=
\left(
\frac{2^m\kappa(Y)}{f_{\{1_G\}}(Y/X)}
\right)^{(2^m-1)(1-2^{m-2})}
\prod_{i=1}^{2^m-1} \left( \frac{2^{m-1}\kappa(X_{C_i})}{f_{C_i}(Y/X)} \right)^{2^{m-1}-1}.
\]
Substituting \eqref{eq:el2-direct-original} into this equation, we obtain
\[
  \left(
    \prod_{j=1}^{2^m-1}
    \frac{2\kappa(X_{H_j})}{f_{H_j}(Y/X)}
  \right)^{2^{m-2}}
  =
  \left(
    \frac{2^m\kappa(Y)}{f_{\{1_G\}}(Y/X)}
    \kappa(X)^{2^m-2}
  \right)^{2^{m-2}}.
\]
Hence,
\begin{equation}\label{eq:el2-direct-factor-form}
  \kappa(Y)
  =
  \frac{2^{\,2^m-m-1}}{\kappa(X)^{\,2^m-2}}
  \frac{f_{\{1_G\}}(Y/X)}
       {\displaystyle\prod_{j=1}^{2^m-1}f_{H_j}(Y/X)}
  \prod_{j=1}^{2^m-1}\kappa(X_{H_j}).
\end{equation}

For each \(v\in V_X\), write \(m_v(Y/X)=2^{r_v}\).
The ramification-factor calculation in
\cref{eg:elementary-abelian-branched} gives
\[
  \frac{f_{\{1_G\}}(Y/X)}
       {\displaystyle\prod_{j=1}^{2^m-1}f_{H_j}(Y/X)}
  =
  2^{\sum_{v\in V_X}
       \left((r_v+1)2^{m-r_v}-2^m\right)}.
\]
Substituting this into \eqref{eq:el2-direct-factor-form}, we obtain
\[
  \kappa(Y)
  =
  \frac{
    2^{2^m-m-1+
       \sum_{v\in V_X}\left((r_v+1)2^{m-r_v}-2^m\right)}
  }{\kappa(X)^{\,2^m-2}}
  \prod_{j=1}^{2^m-1}\kappa(X_{H_j}),
\]
which is \((\varheartsuit)\).
\end{eg}
\begin{eg}
\begin{figure}[t]
\centering
\begin{tikzcd}[
  column sep=1.5em,
  row sep=1em,
  font=\small,
  scale cd=0.6,
  nodes in empty cells
]
{}
&
{}
&
\begin{tikzpicture}
\node[
  draw=none,
  minimum size=4.5cm,
  regular polygon,
  regular polygon sides=11,
  rotate=48
] (a) {};

\foreach \x in {1,2,...,11}{
  \pgfmathtruncatemacro{\xx}{\x}
  \fill (a.corner \xx) circle[radius=1.5pt];
}

\foreach \y/\z in {
  1/3,1/2,2/3,
  1/7,1/11,11/7,
  5/3,5/4,4/3,
  5/7,5/6,6/7,
  9/3,9/10,10/3,
  9/7,9/8,8/7
}{
  \pgfmathtruncatemacro{\yy}{\y}
  \pgfmathtruncatemacro{\zz}{\z}
  \path (a.corner \yy) edge (a.corner \zz);
}
\end{tikzpicture}
\quad \text{\LARGE\(Y\)}
\arrow[dll]
\arrow[dl]
\arrow[d]
\arrow[drr]
&
{}
&
{}
\\[2em]
\begin{tikzpicture}
\node[
  draw=none,
  minimum size=3.2cm,
  regular polygon,
  regular polygon sides=6,
  rotate=0
] (a) {};

\foreach \x in {1,2,...,6}{
  \pgfmathtruncatemacro{\xx}{\x}
  \fill (a.corner \xx) circle[radius=1.5pt];
}

\foreach \y/\z in {
  1/6,
  6/2,6/3,6/5,
  2/1,
  3/4,
  5/4
}{
  \pgfmathtruncatemacro{\yy}{\y}
  \pgfmathtruncatemacro{\zz}{\z}
  \path (a.corner \yy) edge (a.corner \zz);
}

\path (a.corner 4) edge[bend left=10] (a.corner 6);
\path (a.corner 4) edge[bend right=10] (a.corner 6);
\end{tikzpicture}
\quad \text{\Large\(X_{\langle s\rangle}\)}
\arrow[drr]
&
\begin{tikzpicture}
\node[
  draw=none,
  minimum size=3.2cm,
  regular polygon,
  regular polygon sides=6,
  rotate=0
] (a) {};

\foreach \x in {1,2,...,6}{
  \pgfmathtruncatemacro{\xx}{\x}
  \fill (a.corner \xx) circle[radius=1.5pt];
}

\foreach \y/\z in {
  1/6,
  6/2,6/3,6/5,
  2/1,
  3/4,
  5/4
}{
  \pgfmathtruncatemacro{\yy}{\y}
  \pgfmathtruncatemacro{\zz}{\z}
  \path (a.corner \yy) edge (a.corner \zz);
}

\path (a.corner 4) edge[bend left=10] (a.corner 6);
\path (a.corner 4) edge[bend right=10] (a.corner 6);
\end{tikzpicture}
\quad \text{\Large\(X_{\langle rs\rangle}\)}
\arrow[dr]
&
\begin{tikzpicture}
\node[
  draw=none,
  minimum size=3.2cm,
  regular polygon,
  regular polygon sides=6,
  rotate=0
] (a) {};

\foreach \x in {1,2,...,6}{
  \pgfmathtruncatemacro{\xx}{\x}
  \fill (a.corner \xx) circle[radius=1.5pt];
}

\foreach \y/\z in {
  1/6,
  6/2,6/3,6/5,
  2/1,
  3/4,
  5/4
}{
  \pgfmathtruncatemacro{\yy}{\y}
  \pgfmathtruncatemacro{\zz}{\z}
  \path (a.corner \yy) edge (a.corner \zz);
}

\path (a.corner 4) edge[bend left=10] (a.corner 6);
\path (a.corner 4) edge[bend right=10] (a.corner 6);
\end{tikzpicture}
\quad \text{\Large\(X_{\langle r^{2}s\rangle}\)}
\arrow[d]
&
{}
&
\begin{tikzpicture}
\node[
  draw=none,
  minimum size=3.2cm,
  regular polygon,
  regular polygon sides=5,
  rotate=90
] (a) {};

\foreach \x in {1,2,...,5}{
  \pgfmathtruncatemacro{\xx}{\x}
  \fill (a.corner \xx) circle[radius=1.5pt];
}

\foreach \y/\z in {
  1/3,3/4,1/4,
  2/3,3/5,2/5
}{
  \pgfmathtruncatemacro{\yy}{\y}
  \pgfmathtruncatemacro{\zz}{\z}
  \path (a.corner \yy) edge (a.corner \zz);
}
\end{tikzpicture}
\quad \text{\Large\(X_{\langle r\rangle}\)}
\arrow[dll]
\\[2em]
{}
&
{}
&
\begin{tikzpicture}
\node[
  draw=none,
  minimum size=2.8cm,
  regular polygon,
  regular polygon sides=3,
  rotate=0
] (a) {};

\foreach \x in {1,2,3}{
  \pgfmathtruncatemacro{\xx}{\x}
  \fill (a.corner \xx) circle[radius=1.5pt];
}

\foreach \y/\z in {
  1/2,2/3,3/1
}{
  \pgfmathtruncatemacro{\yy}{\y}
  \pgfmathtruncatemacro{\zz}{\z}
  \path (a.corner \yy) edge (a.corner \zz);
}
\end{tikzpicture}
\quad \text{\LARGE\(X\)}
&
{}
&
{}
\end{tikzcd}
\caption{The quotient graphs associated with the subgroups of \(G\)}
\label{fig:intermediate-K3-S3}
\end{figure}

We use the same setup as in \cref{eg:S3}. The cyclic subgroups of \(G=S_{3}\) are
\(
\calC_G
=
\bigl\{
\{1_G\},
\langle s\rangle,
\langle rs\rangle,
\langle r^2s\rangle,
\langle r\rangle
\bigr\}.
\)
The corresponding intermediate graphs are shown in Figure \ref{fig:intermediate-K3-S3}.
By the definition of \(a(C)\), we have
\(
a(\{1_G\})=-1/2,
a(\langle s\rangle)=
a(\langle rs\rangle)=
a(\langle r^2s\rangle)
=1/3,
a(\langle r\rangle)=1/2.
\)
Moreover, we can compute 
\(
f_{\{1_G\}}(Y/X)=6,
f_{\langle s\rangle}(Y/X)=
f_{\langle rs\rangle}(Y/X)=
f_{\langle r^2s\rangle}(Y/X)=3,
f_{\langle r\rangle}(Y/X)=2/3.
\)

Therefore, by \cref{main4},
\begin{equation*}
\kappa(Y)
=
3
\cdot
\frac{\kappa(X_{\langle s\rangle})^{2/3}
\kappa(X_{\langle rs\rangle})^{2/3}
\kappa(X_{\langle r^2s\rangle})^{2/3}
\kappa(X_{\langle r\rangle})}{\kappa(X)^2}.
\end{equation*}

Using SageMath \cite{Sage}, we compute
\(
\kappa(X_{\langle s\rangle})=
\kappa(X_{\langle rs\rangle})=
\kappa(X_{\langle r^2s\rangle})=36,
\kappa(X_{\langle r\rangle})=9.
\)
Hence, from the equation above, we obtain \(\kappa(Y)=3888\).
This agrees with the result obtained by a direct computation in SageMath \cite{Sage}.
\end{eg}


\end{document}